\documentclass[12pt]{article}

\usepackage{amsmath,amsthm,bbm}
\usepackage{amssymb,amsfonts}
\usepackage{graphicx}
\usepackage{datetime}
\usepackage{enumerate}
\usepackage{float}
\usepackage[numbers,square]{natbib}
\usepackage{hyperref}
\usepackage{geometry}

\graphicspath{{./Figures/}}

\newtheorem{theorem}{Theorem}[section]

\newtheorem{lemma}[theorem]{Lemma}
\newtheorem{proposition}[theorem]{Proposition}

\newtheorem{assumption}[theorem]{Assumption}
\newtheorem{remark}[theorem]{Remark}

\usepackage[dvipsnames]{xcolor}

\DeclareMathOperator*{\argmax}{arg\,max}

\title{Resource Extraction from a Stochastic Population with Harvesting Quotas}

\author{
Erik Ekström\thanks{Department of Mathematics, Uppsala University, Box 256, 751 05 Uppsala. Email: ekstrom@math.uu.se.}, 
Dante Mata\thanks{D\'epartement de Math\'ematiques, Universit\'e du Qu\'ebec \`a Montr\'eal (UQAM), 201 av. Pr\'esident-Kennedy, Montr\'eal, QC H2X 3Y7, Canada. Email: mata\_lopez.dante@uqam.ca} 
and Harto Saarinen\thanks{Department of Economics, Turku School of Economics,
FIN-20014 University of Turku, Finland. Email: hoasaa@utu.fi.}}
\date{}

\begin{document}

\maketitle

\begin{abstract}
We study a resource extraction problem of ergodic type, where 
harvesting quotas are imposed that restrict the harvesting rate by a function of the current biomass. In a setting with diffusive dynamics, we provide conditions under which the optimal harvesting strategy is of threshold type. In contrast with the unrestricted case with singular controls, the optimal threshold may be 0, which corresponds to harvesting at the maximal possible rate at all times. Furthermore, we establish the convergence of the solution to the corresponding singular control problem as the harvesting rate tends to infinity.
\end{abstract}

\noindent
\textbf{Keywords:} Optimal harvesting; Restricted harvesting; Ergodic control; Threshold strategies; Linear diffusions.

\noindent
\textbf{MSC 2020}: 93E20, 60J60, 60J70

\section{Introduction}

The optimal harvesting problem plays a central role in the management of renewable natural resources, with applications to the fishery and forestry industries, among others. In many models 
the evolution of a biological population is driven by a stochastic growth dynamics, and the main question is to identify harvesting strategies that maximise a given objective, such as the expected discounted profit or the  expected average yield. Early studies (see e.g.~\cite{Gleit1978, Ludwig1980}) already address the question of optimal harvesting of populations and obtain  strategies that maximise an expected discounted profit. 
For a comprehensive economic treatment of resource extraction, including the trade-off between exploitation and sustainability, we refer to the monograph of Clark~\cite{Clark1976}.

A substantial body of literature has established that, in many problem formulations and under various assumptions regarding the dynamics of the underlying population, optimal harvesting policies often take a simple form, namely that of a {\em threshold} strategy. 
Under such a strategy, harvesting is deployed at maximal intensity once the population size exceeds a critical level, and withheld should the population size fall below it. In~\cite{Alvarez1998, Lungu1997} the optimality of reflection at a threshold has been proven under a discounted criterion and singular control strategies. Related threshold results have been obtained for impulse controls~\cite{LiuZervos2025} and for relaxed controls~\cite{Stockbridge2013}.

Motivated by interests of sustainability, the ergodic criterion has been argued to be more suitable than the discounted criterion since it, by definition, emphasizes the long-term performance. In particular, under the ergodic criterion any harvesting strategy that depletes the population is automatically deemed sub-optimal.
Optimal harvesting under the ergodic criterion has recently been studied by Alvarez and Hening~\cite{AlvarezHening2022} and by Liang et al.~\cite{LiangLiuZervos2025}, in both cases for singular controls. Notably, in \cite{LiangLiuZervos2025}, a running reward is included in the performance criterion, with reference  
to this as a utility of keeping the population level at a certain level.
While both these studies are motivated in part by sustainability considerations, 
one may argue that the trade-off between sustainability and profit maximization can be further developed.
More specifically, in many situations the harvester 
is a pure profit maximizer who is only avoiding over-extraction if it is in their financial interest to do so,
with no or little interest in ecological considerations per se. In fact, the trade-off between profit maximization and sustainability is an inherent conflict between different parties, and, in particular, it is not always reasonable to impose the safeguarding of the environment on the harvester.

It may therefore seem natural to separate the question of profit maximization and sustainability, and at least two complementary approaches are possible:
\begin{enumerate}
    \item Introducing harvesting quotas, where the amount that may be harvested is limited by regulation;
    \item Imposing environmental conditions, where the harvester may extract in any manner, as long as a long-term sustainability criterion is met.
\end{enumerate}
In this paper we follow the first approach and study an ergodic optimal harvesting problem in which the harvesting rate is bounded by a function of the current biomass. 
While this problem is motivated by harvesting quotas, we note that such a restriction on the harvesting rate can also be motivated by physical constraints, such as the vessel capacity for a fishery.
Rate-constrained harvesting has mostly been analysed under the discounted criterion. In \citep{Alvarez1998} the authors treated constant-rate restrictions in the logistic model and showed that the optimal policy remains a threshold rule. More recently, \citep{Ekstrom2023} analyzed a game version of the bounded-rate extraction problem and proved the existence of Nash equilibria in threshold strategies. Closest to the present work, \citep{Guerin2024, LocasRenaud2024, RenaudRochSimard2026} considered a state-dependent bound in a general diffusion model and proved the optimality of a threshold strategy.
An important step was undertaken by Hening et al.~\cite{Hening2019}, who considered an ergodic problem where the harvesting rate is bounded by a constant.
To the best of our knowledge, the current combination — ergodic criterion, stochastic dynamics, and state-dependent harvesting constraints — has not been previously studied, and it gives rise to qualitatively new mathematical challenges as well as richer and more nuanced optimal policies.

Our main contribution is to show that, in the presence of a 
level-dependent harvesting constraint, the optimal strategy remains of threshold type under general conditions and we provide a semi-explicit characterization of the optimal threshold. 
We also demonstrate that, in contrast with the unrestricted setting, the optimal threshold may collapse to zero, corresponding to continuous harvesting at the maximal admissible rate for any positive biomass levels.
Furthermore, as in \cite{Cohen2022} and \cite{LiangLiuZervos2025}, an important difficulty is that solutions to the HJB equation may fail to be bounded from below. 
This requires a modification of the approach in \cite{LiangLiuZervos2025} to accommodate the somewhat different HJB equation arising in our setting. Another contribution of the paper is the proof of monotonic convergence to the singular control problem of \citep{AlvarezHening2022} as the maximal harvesting rate tends to infinity.

The paper is organized as follows. In Section~\ref{2} we introduce the stochastic model and formulate the ergodic harvesting problem in a random environment with a level-dependent constraint on the harvesting rate. In Section~\ref{sec:threshold} we introduce threshold strategies, which are our candidate optimal strategies. Then, in Sections~\ref{b=0} and~\ref{b>0} we derive the associated Hamilton-Jacobi-Bellman equations, and prove the optimality of the threshold strategies. In Section~\ref{Sec:Comparative} we provide a comparative analysis showing the link between our results and the singular control case, as the control rate increases to infinity. We conclude in Section~\ref{Sec:Numerical} by providing numerical examples that illustrate our results.

\section{Problem formulation and main assumptions}
\label{2}
Let $(\Omega, \mathcal{F}, \{\mathcal{F}_t\}_{t \geq 0}, \mathbb{P})$ be a filtered probability space where the filtration $\{\mathcal{F}_t\}_{t \geq 0}$ satisfies the usual conditions of completeness and right-continuity. We consider a linear diffusion $X$ defined on $(\Omega, \mathcal{F}, \{\mathcal{F}_t\}_{t \geq 0}, \mathbb{P})$, which lives on $[0,\infty)$, and which represents the total biomass of a population in the case of no harvesting.
More specifically, we let $X$ satisfy
the stochastic differential equation
\begin{equation} \label{Eq:UncontrolledDiffusion}
    dX_t=\mu(X_t)dt+ \sigma(X_t)dW_t, \quad \quad X_0 = x,
\end{equation}
where $W_t$ is a Wiener process and the functions $\mu: [0,\infty) \to \mathbb{R}$ and $\sigma:[0,\infty) \to [0,\infty)$ are locally Lipschitz continuous and with $\sigma^2(x)>0$ for all $x \in \mathbb{R}_+$. These assumptions guarantee that the stochastic differential equation (\ref{Eq:UncontrolledDiffusion}) has a unique strong solution defined up to a possible explosion time. We also assume that the boundary 
$\infty$ is natural and that the boundary $0$ is either natural or entrance-not-exit (so unattainable), and in particular explosion ($X=\infty$) or extinction ($X=0$) does not happen in finite time. 
For comprehensive characterizations of the boundary behavior of linear diffusions, see \cite[pp. 14-21]{BorodinSalminen} or \cite[Chapter 15.6]{Karlin1981}. 
Even though we consider the case where the process evolves on $(0,\infty)$, we remark that our results would remain the same with obvious changes if the state space would be replaced by another interval.

To introduce our control problem, consider a process $\nu=(\nu_t)_{t\geq 0}$ with $\nu_0=0$ which is nondecreasing, continuous and adapted to $\{\mathcal{F}_t\}_{t \geq 0}$, and let $k:[0,\infty)\to[0,\infty)$ be a given non-decreasing function with $k(x)>0$ for all $x>0$. 
The process $\nu$ is said to be an {\em admissible control} if 
\begin{itemize}
\item the equation 
\begin{equation} \label{Eq:ControlledDiffusion}
    dX^{\nu}_{t} = \mu(X^{\nu}_{t}) dt + \sigma(X^{\nu}_{t}) dW_{t} - d\nu_{t}, \quad X_0 = x,
\end{equation}
admits a unique strong solution on the stochastic interval $[0,\tau^\nu]$, where 
$\tau^\nu=\inf\{t\geq 0:X^\nu_t\leq 0\}$, and 
\item 
$d\nu_t\leq k(X^\nu_t)\mathbf 1_{\{t\leq \tau^\nu\}} dt$ for all $t\geq 0$.
\end{itemize}
We note that no more control can be exerted after the first hitting time of $0$, and we
denote the set of admissible controls $\mathcal{V}$. 

Given an admissible control $\nu\in\mathcal V$, the average reward per unit time (ergodic reward) is defined as
\begin{equation*} 
    J(x; \nu) = \liminf_{T \to \infty} \frac{\mathbb{E}_x [v_T]}{T} .
\end{equation*}
The objective of the ergodic control problem is to maximize $J(x; \nu)$ over all admissible controls, i.e. to find
\begin{equation} \label{Eq:ControlProblemFormulation}
    \sup_{\nu \in \mathcal{V}}  J(x;\nu),
\end{equation}
and to find an optimal control $\nu^*\in\mathcal V$ for which $J(x;\nu^*)=\sup_{\nu \in \mathcal{V}}  J(x,\nu)$.

To formulate our main assumptions, we introduce the scale function $S$ and the speed measure $m$ of the uncontrolled process $X$. In the current setting, we have 
$S'(x) = e^{-B(x)}$ and $m(dx) = m'(x)dx$, where $m'(x)=\frac{2}{\sigma^2(x)} e^{B(x)}$,
$B(x) = \int_{c_0}^x \frac{2\mu(z)}{\sigma^2(z)}dz$ and $c_0 \in \mathbb{R}_+$ is a fixed, but  otherwise arbitrary, constant. 
In calculations below, the identity 
\begin{equation} \label{Eq:ScaleDriftInt}
    \int_x^y \mu(z)m(dz) = \frac{1}{S'(y)}-\frac{1}{S'(x)}
\end{equation}
for $x,y>0$ will be used.

We study the ergodic control problem \eqref{Eq:ControlProblemFormulation} under the following main assumptions (some of which have already been introduced, but are repeated here for clarity and for future reference).

\begin{assumption} \label{Ass:MainAssumptions} 
The coefficients $\mu$ and $\sigma$ are locally Lipschitz continuous with $\sigma^2(x)>0$ for $x>0$, and such that 
    \begin{enumerate}     
            \item[(i)]
            the boundary point $\infty$ is natural, and 0 is natural or entrance-not-exit;
            \item[(ii)]
        there exists an $x^* =\argmax_x \{ \mu(x) \}>0$ so that $\mu(x)$ is increasing on $[0, x^*)$ and decreasing on $(x^*, \infty)$, with
        \[ \mu(0) \geq 0 \quad\&\quad\lim_{x \to \infty} \mu(x) < 0;\]
        \item[(iii)]
        $\int_0^{\infty} m(dy) <\infty$;
        \item[(iv)]
        $S'(\infty):=\lim_{x\to\infty}S'(x)=\infty$.
    \end{enumerate}
    Moreover, 
      \begin{enumerate}     
            \item[(v)]
            $k:[0,\infty)\to[0,\infty)$ is non-decreasing, and $k(x)>0$ for $x>0$.
              \end{enumerate}
\end{assumption}

\begin{remark}
    The assumptions on the drift in (ii) are well aligned with the assumptions used for singular ergodic control problems that investigate barrier strategies for linear diffusion processes, see \citep{Alvarez2018,AlvarezHening2022,Christensen2025}. The assumptions are also analogous to the assumptions used for discounted problems with a restricted control rate (\citep{Alvarez1998, Ekstrom2023, Guerin2024}).

    We also note that 
    since the boundaries are assumed to be natural or entrance-not-exit, item $(iii)$ implies that  
    \begin{equation}\label{Sboundaries}
     S(0+)=-\infty \quad \&\quad S(\infty)=\infty,
    \end{equation} 
    see \citep[p. 234]{Karlin1981}.
    Moreover, (ii) guarantees that 
    \begin{equation*}
        B'(x) = \frac{2 \mu(x)}{\sigma^2(x)} \geq 0
    \end{equation*}
    on $(0,x^*)$.
    Thus, $B$ is nondecreasing on $(0,x^*)$, and in particular the limit $S'(0+):=\lim_{x \downarrow 0}  S'(x) = \exp (- \lim_{x \downarrow 0} B(x))$ exists in $(0, \infty]$. In view of \eqref{Sboundaries}, we then must have $S'(0+)=  \infty$.
    Consequently, the relation \eqref{Eq:ScaleDriftInt} extends so that
\begin{equation}\label{Eq:ScaleDriftInt0}   \int_0^y \mu(z)m(dz) = \frac{1}{S'(y)}
    \end{equation}
    for $y>0$.
    
    Lastly, (i) and (iii) guarantee that a stationary distribution $\pi$ for the uncontrolled diffusion $X$ exists, see \cite[p. 37]{BorodinSalminen}, and is given by
    \begin{equation*}
        \pi(dx) = \frac{m(dx)}{\int_0^{\infty} m(dy)}.
    \end{equation*} 
\end{remark}

\section{Threshold strategies}\label{sec:threshold}

Since the uncontrolled diffusion $X$ is time-homogeneous, it is reasonable to expect that the optimal policy is also time-homogeneous. 
More specifically, we will look for an optimal strategy in the class of {\em barrier rules}, described intuitively in terms of some given threshold $b\geq 0$ as follows. 
\begin{itemize}
    \item[(A)] Assume that the controlled diffusion $X^\nu$ is in the \emph{no-action region} $(0,b)$. Then the decision maker leaves it untouched ($d\nu_t = 0$) and waits.
    \item[(B)] Assume that the controlled diffusion $X^\nu$ is in the \emph{action region} $[b, \infty)$. Then the decision maker applies the control at the maximal possible rate so that $d\nu_t = k(X^\nu_t) dt$ until $X^\nu$ falls below $b$.
\end{itemize}
More precisely, let $b\geq 0$ be a threshold that separates the no-action and action regions. Under our assumptions, the stochastic differential equation 
\begin{equation} \label{Eq:ControlledBarrierDiffusion}
    dX^b_{t} = (\mu(X^b_{t}) - \mathbf{1}_{\{X^b_{t} \geq b\}} k(X^b_{t}) )dt + \sigma(X^b_{t}) dW_{t}, \quad X_0 = x
\end{equation}
admits a unique strong solution on $[0,\tau^b]$, where 
$\tau^b=\inf\{t\geq 0:X^b_t\leq 0\}$,
cf. e.g. \cite[Proposition 5.5.17]{KaratzasShreve1991}. The barrier strategy $\nu=\nu^b$ with threshold $b$ is then defined by
\[    \nu_t=\int_0^{t\wedge\tau^b}\mathbf{1}_{\{X^b_t \geq b\}} k(X^b_t) dt.\]

To study threshold strategies, we note that the controlled process $X^b$ in \eqref{Eq:ControlledBarrierDiffusion} is a diffusion process with drift $\mu(\cdot)-k(\cdot)\mathbf 1_{\{\cdot\geq b\}}$. For a given threshold $b\geq 0$, the scale function $S_b$ of $X^b$ is determined (up to an additive constant) by 
\[    S_b'(x)  = 
        \text{exp}\left(\int_{c_0\vee b}^{x\vee b} \frac{2k(y)}{\sigma^2(y)}dy \right) S'(x),
\]
and the speed measure $m_b$ is given by
\begin{equation*}
        m_b(dx) =
         \text{exp}\left(-\int_{c_0\vee b}^{x\vee b}\frac{2k(y)}{\sigma^2(y)}dy \right)m(dx) .
\end{equation*}
Since $\text{exp}\left(\int_{c_0\vee b}^{x\vee b} \frac{2k(z)}{\sigma^2(z)}dz \right) \geq 1$ for $x\geq c_0$ and $S'(\infty)=\infty$ by (iv) in Assumption~\ref{Ass:MainAssumptions}, we have that $S_0'(\infty)=\infty$.
Similarly, since $\int_0^{\infty} m(dx) < \infty$ by item (iii) of Assumption~\ref{Ass:MainAssumptions}, 
it follows that 
$\int_a^\infty m_b(dx)<\infty$ for any $a>0$.
In particular, for any $b>0$, we have that 
$\int_0^\infty m_b(dy)<\infty$, and 
the diffusion process $X^b$ has a stationary distribution $\pi_b$ given by 
\begin{equation} \label{Eq:StationaryDensityControlled2}
    \pi_b(dx) =  \frac{ \left(\mathbf{1}(x<b) + \text{exp}\left(-\int_b^x \frac{2k(y)}{\sigma^2(y)}dy \right) \mathbf{1}(x \geq b) \right)m(dx)}{\int_0^{b}m(dy) + \int_{b}^{\infty}  \exp\left(-\int_b^y \frac{2k(z)}{\sigma^2(z)}dz \right)m(dy)}.
\end{equation}
On the other hand, if $b=0$, then one applies maximal control all the time, and $X^b=X^0$ does not necessarily have a stationary distribution. 

Below we divide the analysis into two different cases. 
In Section~\ref{b=0} we provide conditions under which 
full harvesting should always be in operation (corresponding to $b=0$).
Then, in Section~\ref{b>0}, we consider the case where a threshold strategy with a strictly positive threshold $b > 0$ is optimal.

The additional assumptions that we impose in each scenario below are formulated in terms of the scale function and the speed measure of the fully controlled process $X^0$ with dynamics 
\begin{equation}\label{X0}
    dX^0_t=(\mu(X^0_t)-k(X^0_t))\,dt + \sigma(X^0_t) dW_t
\end{equation}
for $t\leq \tau^0:=\inf\{t\geq 0:X^0_t\leq 0\}$.
We recall that 
\[S'_0(x) = e^{-B_0(x)} \quad\&\quad m_0(dx) = \frac{2}{\sigma^2(x)} e^{B_0(x)}dx,\]
where 
\[B_0(x) = \int_{c_0}^x \frac{2(\mu(y)-k(y))}{\sigma^2(y)}dy,\]
and that it follows from Assumption~\ref{Ass:MainAssumptions} that $S_0(\infty)=\infty$, $S'_0(\infty)=\infty$ and $\int_a^\infty m_0(dy)<\infty$ for any $a>0$.

\section{A case where one always extracts}
\label{b=0}

In this section we provide conditions under which a threshold strategy with $b=0$ (maximal extraction) is optimal.
In addition to the conditions in Assumption~\ref{Ass:MainAssumptions}, we also impose the following.

\begin{assumption}\label{not hit 0}
    We assume that
    \begin{enumerate}[(i)]
        \item the boundary point $0$ is natural or entrance-not-exit for $X^0$; 
        \item $\int_0^\infty m_0(dy)<\infty $;
        \item $S'_0(0+):=\lim_{x\to 0}S'_0(x)$ exists, with $S'_0(0+)=\infty$;
        \item the quantity \[\overline\mu:=\frac{\int_0^{\infty}\mu(y)m_0(dy)}{\int_0^\infty m_0(dy)}\]
        (which belongs to $[0,\infty)$) satisfies $\overline\mu\leq\mu(0)$.
    \end{enumerate}
\end{assumption}

 \begin{remark}
The assumption (i) that $X^0$ cannot reach 0 in finite time is not very restrictive. Indeed, if instead 0 could be reached, then the long-term payoff would be 0.
Item (ii) guarantees that the fully controlled process is positive recurrent, with $S_0(0)=-\infty$ and with stationary distribution $\frac{m_0(dx)}{\int_0^\infty m_0(dy)}$. 
Finally, (iii) implies that 
 \begin{equation}\label{not hit 00}
 \int_0^y (\mu(z)-k(z)) m_0(dz) = \frac{1}{S'_0(y)},
 \end{equation}
 cf. \eqref{Eq:ScaleDriftInt0}, and therefore 
 \begin{equation}\label{overline mu}
 \overline\mu=\frac{\int_0^{\infty}\mu(y)m_0(dy)}{\int_0^\infty m_0(dy)}=\frac{\int_0^{\infty}k(y)m_0(dy)}{\int_0^\infty m_0(dy)}\in [0,\infty).
 \end{equation}
\end{remark}

Under Assumption~\ref{not hit 0}, the ergodic average of the strategy $\nu^0$ where one always extracts is given by 
\begin{equation}\label{beta*}
 J(x;\nu^0)=\liminf_{T\to \infty}\frac{1}{T}\mathbb{E}_x\left[\int_0^T k(X^0_t) dt \right] = \frac{\int_0^\infty k(y) m_0(dy)}{\int_0^\infty m_0(dy)}=:\beta, 
\end{equation}
see \cite[p. 37]{BorodinSalminen}.
Consequently, 
\[\sup_{\nu \in \mathcal{V}} J(x;\nu) \geq \beta.
\]
In this section, we show that 
\[\sup_{\nu \in \mathcal{V}} J(x;\nu) = \beta,\]
and, in particular, that $\nu^0$ is an optimal strategy.

To this end, we use an auxiliary function $u \in \mathcal{C}^2$ that solves an appropriate ordinary differential equation. More specifically, let $u$ solve the equation
\begin{equation}\label{HJB-always}
\mathcal{A}u(x) - k(x)u'(x) + k(x) = \beta, \quad x \in (0,\infty),
\end{equation}
where 
\begin{align*} 
    \mathcal{A} &= \dfrac{\sigma^2(x)}{2} \dfrac{d^2}{dx^2} + \mu(x)\dfrac{d}{dx} 
\end{align*}
is the infinitesimal generator of the process $X$.
Additionally, we impose the boundary condition $\frac{u'(0+)}{S_0'(0+)}=0$ so that
\begin{equation}\label{u}
u'(x) = S_0'(x)\int_0^x (\beta-k(y))m_0(dy).
\end{equation}

\begin{lemma}\label{Lip}
Let Assumption~\ref{not hit 0} hold.
Then the function $u$ defined (up to an additive constant) in \eqref{u} satisfies
    $0\leq u'(x) \leq 1$ for all $x \in(0,\infty)$.
\end{lemma}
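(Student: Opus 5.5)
The plan is to write $F(x):=\int_0^x(\beta-k(y))\,m_0(dy)$, so that $u'(x)=S_0'(x)F(x)$, and to prove the two inequalities separately. Both reduce to a sign argument: an integral against $m_0$ vanishes at both ends of $(0,\infty)$ and has an integrand that changes sign exactly once, from positive to negative.

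\emph{Lower bound.} Since $S_0'>0$, it suffices to show $F\geq 0$.
\begin{itemize}
\item We have $F(0+)=0$, and $F(\infty)=0$ by the definition \eqref{beta*} of $\beta$ as the $m_0$-average of $k$. This uses $\int_0^\infty m_0(dy)<\infty$ from Assumption~\ref{not hit 0}(ii).
\item Since $k$ is non-decreasing, $y\mapsto\beta-k(y)$ is non-increasing.
\item Hence there is a point $c\in[0,\infty]$ with $\beta-k\ge0$ on $(0,c)$ and $\beta-k\le 0$ on $(c,\infty)$.
\item So $F$ is non-decreasing on $(0,c)$ and non-increasing on $(c,\infty)$. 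Such a function with zero limits at both ends is non-negative throughout.
\end{itemize}

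\emph{Upper bound.} I will use \eqref{not hit 00}, namely $1/S_0'(x)=\int_0^x(\mu(y)-k(y))\,m_0(dy)$. Since $1/S_0'(x)>0$, multiplying through shows that $u'(x)\le 1$ is equivalent to $F(x)\le 1/S_0'(x)$. That in turn is equivalent to
\[
G(x):=\int_0^x(\mu(y)-\beta)\,m_0(dy)\;\ge\;0 .
\]
I will run the same argument on $G$.
\begin{itemize}
\item We have $G(0+)=0$. By \eqref{overline mu} we have $\beta=\overline\mu$, so $G(\infty)=0$.
\item Alternatively, letting $x\to\infty$ in \eqref{not hit 00} and using $S_0'(\infty)=\infty$ gives $\int_0^\infty(\mu-k)\,dm_0=0$. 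Combined with $F(\infty)=0$, this yields $G(\infty)=0$.
\item On $[0,x^*]$, Assumption~\ref{Ass:MainAssumptions}(ii) gives that $\mu$ is increasing. Together with $\mu(0)\ge\overline\mu=\beta$ from Assumption~\ref{not hit 0}(iv), this gives $\mu-\beta\ge 0$ there.
\item On $(x^*,\infty)$, $\mu$ is decreasing with $\lim_{x\to\infty}\mu(x)<0\le\beta$. So $\mu-\beta$ changes sign exactly once there, from positive to negative.
\item Hence $G$ first increases and then decreases. With zero limits at both ends, this forces $G\ge0$, which gives $u'\le1$.
\end{itemize}

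\emph{Main obstacle.} The only delicate point is justifying the boundary limits $F(\infty)=0$ and $G(\infty)=0$, i.e.\ integrability of $k$ and $\mu$ against $m_0$. I would settle this first via \eqref{not hit 00}.
\begin{itemize}
\item Assumption~\ref{not hit 0}(iii) allows \eqref{not hit 00} to hold with the lower limit at $0$.
\item Since $S_0'(\infty)=\infty$, letting $x\to\infty$ gives $\int_0^\infty(\mu-k)\,dm_0=0$.
\item Finiteness of $\beta$ (equivalently $\int_0^\infty k\,dm_0<\infty$, as asserted in Assumption~\ref{not hit 0}(iv) and \eqref{overline mu}) then gives integrability of $\mu$.
\end{itemize}
Once these limits are secured, both bounds follow directly from the monotonicity of $k$ and the unimodality of $\mu$.
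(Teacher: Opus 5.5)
Your proof is correct and follows essentially the paper's argument. The upper bound is identical: the paper's $F(x)=\int_0^x(\mu(y)-\overline\mu)\,m_0(dy)$ is your $G$, shown to be nonnegative via the single sign change of $\mu-\overline\mu$ and its zero boundary values. For the lower bound, the paper phrases the monotonicity of $k$ as the running average $\int_0^x k(y)\,m_0(dy)/\int_0^x m_0(dy)$ being non-decreasing with limit $\beta$, which is equivalent to your single-crossing argument for $\beta-k$; your explicit derivation of $\int_0^\infty(\mu-k)\,dm_0=0$ from $S_0'(\infty)=\infty$ just spells out what the paper takes from \eqref{overline mu}.
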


\begin{proof}
    First note that  
    \begin{align*}
    \frac{u'(x)}{S_0'(x)} &= \int_0^x (\beta-k(y))m_0(dy) \\
    & = \int_0^xm_0(dy)\left(\frac{\int_0^{\infty}k(y)m_0(dy)}{\int_0^\infty m_0(dy)} - \frac{\int_0^x k(y) m_0(dy)}{\int_0^x m_0(dy)} \right).
    \end{align*}
    Since $k$ is non-decreasing, the function 
    \[x\mapsto \frac{\int_0^x k(y) m_0(dy)}{\int_0^x m_0(dy)}\]
    is also non-decreasing, so
    $u'(x) \geq 0$.

    Next, define
    \begin{equation*}
        F(x) = \int_0^x (\mu(y)-\overline{\mu}) m_0(dy).
    \end{equation*} 
    As by our assumptions $\mu$ is unimodal with $\mu(0) \geq \overline{\mu} > \mu(\infty)$, there is a unique solution $x_0>0$ of  $\mu(x_0) = \overline{\mu}$. Moreover, $F'(x) =  (\mu(x)-\overline{\mu})m_0'(x)$ is positive on $(0,x_0)$ and negative on $(x_0, \infty)$. Since $F(0) = 0$ and $F(\infty) = 0$, we find that
    \begin{equation*}
        F(x) \geq 0, \quad \text{ for all } \, x >0.
    \end{equation*}
    Therefore, using \eqref{not hit 00}, we have
    \begin{align*}
    \frac{u'(x)-1}{S_0'(x)} &= \int_0^x (\beta-k(y))m_0(dy) - \frac{1}{S_0'(x)} \\
    & = \int^x_0 (\beta-k(y))m_0(dy) - \int_0^x(\mu(y)-k(y)) m_0(dy)\\
    & = \beta \int_0^x m_0(dy)-\int_0^x \mu(y) m_0(dy)  
     = -F(x) \leq 0,
    \end{align*}
    proving that $u'(x)\leq 1$.
\end{proof}

\begin{theorem}\label{verification-always}
Let Assumption~\ref{not hit 0} hold. 
    Then we have 
    \[\sup_\nu J(x;\nu) =\beta.\]
Consequently, in view of \eqref{beta*}, $\nu^0$ is an optimal strategy, i.e. $J(x;\nu^0)=\beta$.
\end{theorem}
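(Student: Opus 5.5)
We plan a verification argument based on the function $u$ from \eqref{u}. Fix an admissible $\nu\in\mathcal V$, write $d\nu_t=\eta_t\,dt$ with $0\le \eta_t\le k(X^\nu_t)$ on $\{t\le\tau^\nu\}$, and apply It\^o's formula to $u(X^\nu_t)$ up to $T\wedge\tau^\nu\wedge\tau_n$, where $\tau_n$ is the exit time from $[1/n,n]$. This gives
\begin{align*}
u(X^\nu_{T\wedge\rho_n}) &= u(x)+\int_0^{T\wedge\rho_n}\bigl(\mathcal A u(X^\nu_t)-\eta_t u'(X^\nu_t)\bigr)dt + M_{T\wedge\rho_n},
\end{align*}
with $\rho_n=\tau^\nu\wedge\tau_n$ and $M$ a martingale, since $u'$ and $\sigma$ are bounded on $[1/n,n]$. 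By \eqref{HJB-always}, $\mathcal Au-\eta u'=\beta-k+(k-\eta)u'$. Using $u'\le 1$ from Lemma~\ref{Lip}, we get $\beta-k+(k-\eta)u'\le \beta-\eta$, because $k-\eta\ge 0$. Hence
\[
\mathbb E_x[\nu_{T\wedge\rho_n}] \le \beta\,\mathbb E_x[T\wedge\rho_n] + u(x)-\mathbb E_x[u(X^\nu_{T\wedge\rho_n})].
\]

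The next step is to let $n\to\infty$. Since $\nu$ is nondecreasing, monotone convergence handles the left-hand side and gives $\mathbb E_x[\nu_{T\wedge\tau^\nu}]=\mathbb E_x[\nu_T]$, because no control is exerted after $\tau^\nu$. The term $\beta\,\mathbb E_x[T\wedge\rho_n]$ is bounded by $\beta T$. What remains is the claim $\liminf_n \mathbb E_x[u(X^\nu_{T\wedge\rho_n})]\ge C_T$ for some $C_T$ with $C_T/T\to0$. Dividing by $T$ and taking $\liminf$ would then give $J(x;\nu)\le\beta$, and \eqref{beta*} supplies the reverse inequality.

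The main obstacle is bounding $u$ from below. By Lemma~\ref{Lip}, $u$ is nondecreasing, so $u(X)\ge u(0+)$. If $u(0+)>-\infty$, we are done: Fatou's lemma, or simply the lower bound, gives the claim with $C_T$ constant. However, $u(0+)$ may be $-\infty$, because $u'(x)\approx S_0'(x)\,\beta\, m_0((0,x))$ near $0$, and this need not be integrable. This is the difficulty flagged in the introduction, namely that the HJB solution need not be bounded below.

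To handle this, I would first try to show integrability near $0$. Near $0$, $u'(x)=S_0'(x)\int_0^x(\beta-k)\,dm_0$, and by \eqref{not hit 00} we have $\int_0^x k\,dm_0\ge\int_0^x \mu\, dm_0 - 1/S_0'(x)$. I would try to combine this with the entrance/natural boundary classification, through the finiteness of $\int_0 S_0'(x)m_0((0,x))\,dx$ for an entrance boundary. If that fails, which happens in the natural case, the fallback is to replace $u$ by $u_\varepsilon:=\max(u,u(\varepsilon))$-type truncations, or to add the harmless extra state region. More concretely, I would compare with the alternative strategy and argue directly. On $\{X^\nu\le\varepsilon\}$ the inequality is only needed up to a term bounded by $\mathbb E_x\int_0^T \mathbf 1_{\{X^\nu_t\le\varepsilon\}}k(X^\nu_t)\,dt\le k(\varepsilon)T$. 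Using a $C^1$ modification $\tilde u$ equal to $u$ on $[\varepsilon,\infty)$ and linear with slope $u'(\varepsilon)\in[0,1]$ below, the It\^o argument yields $J(x;\nu)\le\beta+k(\varepsilon)+o(1)$, up to a generator error on $(0,\varepsilon)$ that is controlled by boundedness of $\mu$ and $k$ near $0$. Letting $\varepsilon\downarrow0$ and using $k(0+)$ together with the continuity assumptions then concludes, possibly after an approximation in the spirit of \cite{LiangLiuZervos2025}.
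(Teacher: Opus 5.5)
Your It\^o step is the same as the paper's. You localize, use \eqref{HJB-always} with $u'\le 1$ and $\eta\le k$ to get $\mathcal Au-\eta u'\le\beta-\eta$, and pass to the limit in $n$ by monotone convergence.

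The gap is in the step you call the main obstacle, and it rests on a false claim. You say $u(0+)$ may be $-\infty$ because $u'$ need not be integrable near $0$. But Lemma~\ref{Lip}, which you already use, gives $0\le u'\le 1$ on all of $(0,\infty)$. Hence for $y\in(0,1]$
\[
u(y)=u(1)-\int_y^1 u'(z)\,dz\ \ge\ u(1)-1,
\]
and since $u$ is nondecreasing, $u\ge u(1)-1$ everywhere. This is what the paper means by ``$u$ is bounded from below (by Lemma~\ref{Lip})''. It gives $\mathbb E_x[\nu_{T\wedge\rho_n}]\le\beta T+u(x)-u(1)+1$, and $J(x;\nu)\le\beta$ follows.

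The missing lower bound mentioned in the introduction belongs to the positive-threshold case of Section~\ref{b>0}, not to this one. There the HJB solution has $u'\ge 1$ below $b^*$, and $u'$ can fail to be integrable at $0$ because $S'(0+)=\infty$. That is why the family $u_\beta$ is needed there and not here.

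As written, though, your proposal does not reach the conclusion, because the fallback you sketch would not give $J(x;\nu)\le\beta$. The truncation yields at best a bound of the form $\beta+k(\varepsilon)+(\text{error on }(0,\varepsilon))$. As $\varepsilon\downarrow 0$, $k(\varepsilon)\to k(0+)$, which is strictly positive whenever $k(0)>0$. Assumption~\ref{Ass:MainAssumptions}(v) allows this, and the paper's own example has $k_M(0)=M/10$, including the small-$M$ always-extract regime. The generator error on $(0,\varepsilon)$ is also only bounded, not small, since $\mu(0)\ge\beta$ under Assumption~\ref{not hit 0}(iv). Nor do you control how long an arbitrary $X^\nu$ spends in $(0,\varepsilon)$.

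Drop your last two paragraphs and insert the Lipschitz observation above. The argument is then complete and coincides with the paper's proof.
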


\begin{proof}
     Let $T > 0$, let $\nu$ be an admissible strategy with  corresponding controlled process $X^\nu$, and define 
     $T_n=T\wedge\tau_n$, where $\tau_n := \inf \{ t\geq 0: X^\nu_t \notin (1/n,n) \}$. Then, by an application of It\^o's formula, 
     we get
    \[
    u(X^\nu_{T_n}) - u(x) = \int_0^{T_n} \mathcal{A}u(X^\nu_s) ds - \int_0^{T_n} u'(X^\nu_s) d\nu_s + \int_0^{T_n} u'(X^\nu_s)\sigma(X^\nu_s) dW_s.
    \]
    Taking expectations yields
    \[
        \mathbb{E}_x [u(X^\nu_{T_n}) ] - u(x) = \mathbb{E}_x \left[ \int_0^{T_n} \mathcal{A}u(X^\nu_s) ds - \int_0^{T_n} u'(X^\nu_s) d\nu_s \right],
    \]
    and, since $u$ solves the differential equation \eqref{HJB-always}, we find that
    \begin{align*}
        & \mathbb{E}_x [u(X^\nu_{T_n})] - u(x) 
         = \mathbb{E}_x\left[\int_0^{T_n} (\beta - k(X^\nu_s))ds \right] - \mathbb{E}_x\left[\int_0^{T_n}u'(X^\nu_s)( d\nu_s - k(X^\nu_s)ds ) \right].
    \end{align*}
    Using $d\nu_t \leq k(X^\nu_t) dt$ and the bound $u'(x) \leq 1$ (see Lemma~\ref{Lip}), we find that
    \[ \mathbb{E}_x [u(X^\nu_{T_n})] - u(x) 
         \leq \mathbb{E}_x \left[ \int_0^{T_n} (\beta - k(X^\nu_s))ds \right] - \mathbb{E}_x \left[\int_0^{T_n}( d\nu_s - k(X^\nu_s)ds ) \right],
    \]
    so
    \[
    \mathbb{E}_x [u(X^\nu_{T_n})] - u(x) + \mathbb{E}_x \left[\int_0^{T_n} d\nu_s \right] \leq \mathbb{E}_x \left[\int_0^{T_n} \beta ds \right].
    \]
    Taking the limit as $n\to\infty$, we have $T_n\to T$ a.s. Using that $u$ is bounded from below (by Lemma~\ref{Lip}), we get by monotone convergence and Fatou's lemma that
    \[    \mathbb{E}_x [u(X^\nu_{T}) ] - u(x) + \mathbb{E}_x \left[\int_0^{T} d\nu_s \right] \leq \beta T.
    \]
    Also, since $u$ is bounded from below, it follows that
    \[J(x;\nu)=\liminf_{T\to\infty}\frac{1}{T}\mathbb E_x[\nu_T]\leq \beta,\]    
    which finishes the proof.
\end{proof}

\section{A case with a positive threshold}
\label{b>0}

In this section we provide conditions under which a threshold strategy with a strictly positive threshold is optimal. 
In addition to Assumption~\ref{Ass:MainAssumptions}, we impose the following conditions.

\begin{assumption}\label{hit 0}
We have that
\begin{enumerate}[(i)]
    \item 
    $\int_0^\infty m_0(dy)<\infty$;
    \item 
    the quantity 
\[\overline\mu:=\frac{\int_0^{\infty}\mu(y)m_0(dy)}{\int_0^\infty m_0(dy)}\]
    satisfies $\mu(0)<\overline\mu$.
\end{enumerate}
\end{assumption}

Recall from Section~\ref{sec:threshold} the notation $X^b$ (the diffusion obtained from a threshold strategy with threshold $b$) and $\pi_b$ (the stationary distribution of $X^b$ when $b>0$).
If a threshold strategy with $b>0$ is applied, then the average long-term payoff is given by
\begin{eqnarray*}
J(x;\nu^b) &=&    \liminf_{T\to\infty}\frac{1}{T}\mathbb E_x\left[\int_0^Tk(X_t^b) \mathbf{1}_{\{X^b_t \geq b\}} dt\right] =
    \int_0^{\infty}  k(y) \mathbf{1}(y \geq b) \pi_b(dy)\\
   &=& \frac{\int_b^{\infty} k(y)\exp\left(-\int_b^y \frac{2k(z)}{\sigma^2(z)}dz \right)m(dy)}{\int_0^{b}m(dy) + \int_{b}^{\infty} \exp\left(-\int_b^y \frac{2k(z)}{\sigma^2(z)}dz \right)m(dy)}.
    \end{eqnarray*}
Straightforward calculations, using (\ref{Eq:ScaleDriftInt}), then show that
\begin{equation}\label{beta(b)}
    J(x;\nu^b) = \frac{\exp\left(-\int_c^b\frac{2k(z)}{\sigma^2(z)}dz\right)\int_0^{b} \mu(y)m(dy) +
\int_{b}^{\infty} \mu(y)m_0(dy)}{\exp\left(-\int_c^b\frac{2k(z)}{\sigma^2(z)}dz\right)\int_0^{b}m(dy)+\int_{b}^{\infty} m_0(dy)}=:\beta(b).
\end{equation}
    
 We note by standard differentiation that if $b>0$ maximizes the expression in \eqref{beta(b)}, then $b$ must satisfy the first order condition
 \begin{align*}
&\left(\exp\left(-\int_c^b\frac{2k(z)}{\sigma^2(z)}dz\right)\int_0^{b}m(dy) + \int_{b}^{\infty} m_0(dy)\right)\int_0^{b}m(dy)\\
& =
 \left(\exp\left(-\int_c^b\frac{2k(z)}{\sigma^2(z)}dz\right)\int_0^{b}\mu(y)m(dy) + \int_{b}^{\infty} \mu(y)m_0(dy)\right)\int_0^b m(dy).
 \end{align*}
Simplifications, using formula (\ref{Eq:ScaleDriftInt}), lead to the equation
\begin{align} \label{Eq:b}
      \frac{\int_0^{b} \mu(y)m(dy)}{\int_0^{b}m(dy)}=  \frac{\int_{b}^{\infty} \mu(y)m_0(dy)}{\int_{b}^{\infty} m_0(dy)}.
\end{align}
Using this condition in the expression \eqref{beta(b)} for $\beta(b)$, we note that if $b$ satisfies \eqref{Eq:b}, then 
\begin{equation*}
    \beta(b) = \frac{\int_0^{b} \mu(y)m(dy)}{\int_0^{b}m(dy)} =\frac{\int_{b}^{\infty} \mu(y)m_0(dy)}{\int_{b}^{\infty} m_0(dy)}.
\end{equation*}

To investigate the existence and uniqueness of solutions to \eqref{Eq:b}, we introduce the functions
\begin{align*}
    G(x) = \frac{\int_0^{x} \mu(y)m(dy)}{\int_0^{x}m(dy)}, \quad
    H(x) =\frac{\int_{x}^{\infty} \mu(y)m_0(dy)}{\int_{x}^{\infty} m_0(dy)},
\end{align*}
and note that equation \eqref{Eq:b} then reads $G(b)=H(b)$.
Denote $x_0>x^*$ the unique positive solution to $\mu(x_0)=\mu(0)$.

\begin{lemma} \label{Lemma: Gshape}
    There exists a point $x'\in(0,x_0)$ such that the function $G$ 
    is increasing and satisfies $G<\mu$ on $(0,x')$, and decreasing with $G>\mu$ on $(x',\infty)$.    
\end{lemma}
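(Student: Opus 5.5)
The key tool is the derivative identity
\[
G'(x)=\frac{m'(x)}{\int_0^x m(dy)}\bigl(\mu(x)-G(x)\bigr),
\]
which holds for all $x>0$. It shows that $G$ increases exactly where $G<\mu$ and decreases exactly where $G>\mu$. It therefore suffices to show that the function $D:=\mu-G$ changes sign exactly once on $(0,\infty)$, from positive to negative, at some point $x'\in(0,x_0)$.

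\emph{Behaviour near $0$.} Since $\mu$ is increasing on $(0,x^*)$, for $x\le x^*$ we have $G(x)\le \mu(x)$, with strict inequality for $x>0$ because $\mu$ is strictly increasing there, so $G$ is a strict average of smaller values. Hence $D>0$ on $(0,x^*]$.

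\emph{Existence of a crossing before $x_0$.} By \eqref{Eq:ScaleDriftInt0} we have $\int_0^x\mu\,dm=1/S'(x)>0$, so $G>0$. At $x_0$ we use $\mu(x_0)=\mu(0)$. On $(0,x_0)$ we have $\mu(y)>\mu(0)$ by unimodality, so $G(x_0)>\mu(0)=\mu(x_0)$, that is, $D(x_0)<0$. By continuity there is a first zero $x'\in(x^*,x_0)$ of $D$.

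\emph{Uniqueness of the crossing.} On $(x^*,\infty)$ the function $\mu$ is decreasing. At any zero $\hat x>x^*$ of $D$ we have $G'(\hat x)=0$, while $\mu$ is nonincreasing there. To avoid differentiability issues I use a non-derivative argument: once $D(\hat x)\le 0$ at some $\hat x\ge x^*$, then $D<0$ on $(\hat x,\infty)$. To see this, let $y>\hat x$. Then $G(y)$ is a weighted average of $G(\hat x)$ (with weight $\int_0^{\hat x}m$) and the values of $\mu$ on $(\hat x,y)$. Each of these is at least $\mu(y)$, since $G(\hat x)\ge\mu(\hat x)\ge\mu(y)$ and $\mu$ is decreasing beyond $x^*$. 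The average is strictly greater than $\mu(y)$ because $\mu$ is strictly decreasing on $(\hat x,y)$, or more simply because the weight on $G(\hat x)$ is positive and the inequality there is strict when $\hat x<y$ and $\mu$ is decreasing. If $\mu$ is only nonincreasing, strictness comes from $G(\hat x)\ge\mu(\hat x)$ together with at least one strict inequality, which needs some care. Thus $D<0$ on $(x',\infty)$ and $D>0$ on $(0,x')$.

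\emph{Conclusion and main obstacle.} Combining the sign of $D$ with the formula for $G'$ gives that $G$ is increasing with $G<\mu$ on $(0,x')$, and decreasing with $G>\mu$ on $(x',\infty)$. The only delicate point is strictness in the monotonicity of $\mu$; I would read ``increasing/decreasing'' in Assumption~\ref{Ass:MainAssumptions}(ii) as strict, which is also implicit in the definition of $x_0$ as the unique solution.
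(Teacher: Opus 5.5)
Your proof is correct and follows the paper's route: the identity $G'(x)=\frac{m'(x)}{\int_0^x m(dy)}(\mu(x)-G(x))$, the signs $\mu(x^*)-G(x^*)>0$ and $\mu(x_0)-G(x_0)<0$, and continuity. You also prove that $\mu-G$ changes sign only once, via the weighted-average argument beyond $x^*$; the paper leaves this step implicit when it says the monotonicity is ``clear from the expression for $G'$''.
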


\begin{proof}
    Direct differentiation yields that
    \begin{align*}
        G'(x) = \frac{m'(x)}{\left( \int_0^x m(dy) \right)^2} \left( \mu(x) \int_0^x m(dy)- \int_0^x \mu(y)m(dy) \right) 
        = \frac{m'(x)}{ \int_0^x m(dy)}(\mu(x)-G(x)). 
    \end{align*}
    Moreover, $\mu(x^*)-G(x^*) > 0$ and $\mu(x_0)-G(x_0) < 0$, which shows by continuity that there exists a root $x'$ to $G(x') = \mu(x')$ with $x' \in (0, x_0)$. Furthermore, it is clear from the expression for $G'$ that $G$ is increasing on $(0,x')$ and decreasing on $(x',\infty)$.
\end{proof}

\begin{remark}
The point $x'$ is the optimal threshold in the corresponding singular ergodic control problem (cf. \cite{AlvarezHening2022} and \cite{LiangLiuZervos2025}).    
\end{remark}

\begin{lemma} \label{Lemma: Uniqueb}
    Equation \eqref{Eq:b} admits a unique solution $b^*\in(0,\infty)$. Moreover, $b^*\in(0,x')$.
\end{lemma}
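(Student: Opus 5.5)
We will show that $G-H$ changes sign exactly once on $(0,\infty)$, and that the sign change happens in $(0,x')$. The work consists of computing the boundary values of $G$ and $H$ and establishing monotonicity of $H$ in the same way Lemma~\ref{Lemma: Gshape} did for $G$.

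\textbf{Boundary values.} As $x\downarrow 0$, continuity of $\mu$ gives $G(0+)=\mu(0)$, while $H(0+)=\overline\mu$. Assumption~\ref{hit 0}(ii) then yields $G(0+)<H(0+)$.

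\textbf{Differential equation for $H$.} Differentiating gives
\[
H'(x)=\frac{m_0'(x)}{\int_x^\infty m_0(dy)}\bigl(H(x)-\mu(x)\bigr).
\]
Hence $H$ increases where $H>\mu$ and decreases where $H<\mu$.

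\textbf{Where $\mu$ is decreasing.} On $(x^*,\infty)$ we have $\mu(y)<\mu(x)$ for all $y>x$, so $H(x)<\mu(x)$. Thus $H$ is decreasing there.

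\textbf{Where $\mu$ is increasing.} On $(0,x^*)$ we argue as follows. Any point where $H=\mu$ and $\mu$ is increasing is a crossing from $H>\mu$ to $H<\mu$. Indeed, at such a point $H'=0<\mu'$, at least when $\mu$ is strictly increasing; otherwise one uses a comparison argument. Consequently $H-\mu$ changes sign at most once, and $H$ is unimodal: it increases and then decreases, or it is decreasing throughout.

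\textbf{Crossing at $x'$.} By Lemma~\ref{Lemma: Gshape}, $G(x')=\mu(x')$. I claim $H(x')<\mu(x')$.
\begin{itemize}
\item If $x'\ge x^*$, this follows from the decreasing-$\mu$ step.
\item If $x'<x^*$, I would show that the single crossing point $\hat x$ of $H$ and $\mu$ (where $H$ attains its maximum) satisfies $\hat x<x'$. This is the main obstacle.
\end{itemize}
My first attempt is a direct comparison. Suppose $H(x')\ge\mu(x')$. Then
\[
\int_{x'}^\infty(\mu-\mu(x'))\,m_0(dy)\ge 0,
\qquad
\int_0^{x'}(\mu-\mu(x'))\,m(dy)=0 .
\]
The function $\mu-\mu(x')$ is negative on $(0,x')$, positive on $(x',x_1)$ and negative beyond, where $x_1$ solves $\mu(x_1)=\mu(x')$. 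Writing $m_0=e^{-\int 2k/\sigma^2}m$ with a decreasing weight, I would try to combine these two integral identities with $\overline\mu>\mu(0)$ to obtain a contradiction. An alternative that avoids this: prove single crossing by showing $G-H$ is strictly increasing wherever $G=H$. At such a point, use $G'$ and $H'$ together with $G=H$, and show that $\mu>G$ there, which holds exactly when the point lies in $(0,x')$.

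\textbf{Conclusion.} On $[x',\infty)$, $G$ is decreasing with $G\ge\mu$, while $H<\mu$ there (at least beyond $x^*$). So $G>H$ on $[x',\infty)$, and $G\neq H$ there. On $(0,x')$, $G(0+)<H(0+)$ and $G(x')>H(x')$, so the intermediate value theorem gives a root $b^*\in(0,x')$.

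\textbf{Uniqueness.} At any root $b$ in $(0,x')$ we have $G(b)=H(b)<\mu(b)$. Therefore $G'(b)>0$ and $H'(b)<0$, so $(G-H)'(b)>0$. Every root is thus a strict upcrossing, and $G-H$ has exactly one zero.

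The delicate point remains verifying $H(x')<\mu(x')$, equivalently $H\le G$ at $x'$, when $x'<x^*$. I would attack it through the identity $(H-\mu)'=H'-\mu'$, using that $H$ is unimodal on $(0,x^*)$ with $H>\mu$ before its peak and $H<\mu$ after it.
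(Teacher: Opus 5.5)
Your skeleton is the paper's proof. It uses $G(0+)=\mu(0)<\overline\mu=H(0+)$, the intermediate value theorem on $(0,x')$, and the exclusion of roots on $[x',\infty)$ because $G\ge\mu>H$ there. Uniqueness then follows because at any root in $(0,x')$ one has $G=H<\mu$, hence $G'>0>H'$. As written, however, the argument has a genuine hole exactly where you flag it. The existence step needs $G(x')>H(x')$, and the exclusion of roots on $[x',\infty)$ needs $H<\mu$ there. You prove $H<\mu$ only on $(x^*,\infty)$. The case $x'<x^*$ is left open, with an unfinished comparison and a unimodality analysis of $H$ offered as possible attacks.

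The missing observation is that this case cannot occur, and it has nothing to do with $H$ or $\overline\mu$. For $x\le x^*$ we have $\mu(y)<\mu(x)$ for all $y\in(0,x)$, since $\mu$ is increasing on $[0,x^*)$. Because $G(x)$ is the $m$-weighted average of $\mu$ over $(0,x)$, this gives $G(x)<\mu(x)$ on $(0,x^*]$. By Lemma~\ref{Lemma: Gshape}, $G\ge\mu$ exactly on $[x',\infty)$, so $x'>x^*$. This is what the paper's proof of Lemma~\ref{Lemma: Gshape} records with $\mu(x^*)-G(x^*)>0$, and what the paper's proof of this lemma uses implicitly when it calls $\mu>H$ on $(x',\infty)$ ``clear''.

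You in fact wrote the contradiction down yourself. If $x'<x^*$, then $\mu-\mu(x')<0$ on $(0,x')$, which is incompatible with your identity $\int_0^{x'}(\mu-\mu(x'))\,m(dy)=0$. Once $x'>x^*$ is in hand, your decreasing-$\mu$ step gives $H(x)<\mu(x)$ for every $x\ge x'$, and this closes both gaps. The single-crossing and unimodality analysis of $H$ on $(0,x^*)$, which is the content of Lemma~\ref{Lemma: Hshape}, is then unnecessary for this lemma.
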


\begin{proof}
    It is clear that 
    \[G(0+)=\mu(0)<\overline\mu=H(0+)\] 
    and 
    \[G(x)>\mu(x)> H(x)\]
    for $x>x'$. This, together with continuity of $G$ and $H$, shows 
    the existence of a solution on $(0,x')$, and that no solution exists on $[x',\infty)$.
    
    Also, if $\overline x\in(0,x')$ solves $G(\overline x)=H(\overline x)$, then $G'(\overline x)\geq 0$ and
    \begin{eqnarray*}
        H'(\overline x) = m_0'(\overline x)\frac{H(\overline x)-\mu(\overline x)}{\int_{\overline x}^\infty m_0(dy)}
        = m_0'(\overline x)\frac{G(\overline x)-\mu(\overline x)}{\int_{\overline x}^\infty m_0(dy)} <0.
    \end{eqnarray*}
    From this, uniqueness of solutions to Equation~\eqref{Eq:b} follows.
\end{proof}

The following lemma further restricts the interval for the candidate optimal threshold $b^*$.

\begin{lemma} \label{Lemma: Hshape}
    There exists a point $x_H \in(0,b^*)$ such that the function $H$ is increasing and satisfies $H> \mu$ on $(0,x_H)$, and decreasing with $H<\mu$ on $(x_H,\infty)$. Consequently, $b^* \in (x_H, x')$.
\end{lemma}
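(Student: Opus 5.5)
We follow the pattern of the proof of Lemma~\ref{Lemma: Gshape}, now for the tail average $H$. Differentiating $H(x)=\int_x^\infty \mu(y)m_0(dy)\big/\int_x^\infty m_0(dy)$ gives
\[
H'(x)=\frac{m_0'(x)}{\int_x^\infty m_0(dy)}\bigl(H(x)-\mu(x)\bigr).
\]
This identity was already used in the proof of Lemma~\ref{Lemma: Uniqueb}. Since $m_0'>0$, the monotonicity of $H$ is governed by the sign of $H-\mu$. The task therefore reduces to showing that $D:=H-\mu$ changes sign exactly once on $(0,\infty)$, from positive to negative, at a point $x_H$. Then $H$ is increasing on $(0,x_H)$ and decreasing on $(x_H,\infty)$.

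\textbf{Sign of $D$ near the boundaries.} At $0+$, Assumption~\ref{hit 0}(ii) gives $D(0+)=\overline\mu-\mu(0)>0$, since $H(0+)=\overline\mu$.

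For large $x$ we use that $\mu$ is decreasing on $(x^*,\infty)$. For $x\ge x^*$, $H(x)$ is an average of $\mu$ over $[x,\infty)$ with respect to $m_0$. Because $\mu(y)\le\mu(x)$ for $y\ge x$, we get $H(x)\le \mu(x)$, with strict inequality whenever $\mu$ is not constant on $[x,\infty)$. Strict inequality holds here because $\mu$ is strictly decreasing by the unimodality assumption together with $\lim_{x\to\infty}\mu(x)<\mu(x^*)$. Hence $D<0$ on $[x^*,\infty)$, and a zero $x_H\in(0,x^*)$ exists by continuity.

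\textbf{Uniqueness of the zero.} Suppose $D(\bar x)=0$ at some $\bar x<x^*$. Then $H'(\bar x)=0$, while $\mu'(\bar x)\ge0$ because $\mu$ is increasing on $(0,x^*)$. So $D$ is nonincreasing at each zero, which does not yet rule out tangential zeros.

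The cleaner route is to argue directly. On $(0,x^*)$, $H$ solves the linear ODE $H'=c(x)(H-\mu)$ with $c>0$. Let $x_H$ be the infimum of the zeros of $D$. Suppose there were $y>x_H$, $y<x^*$, with $D(y)>0$, and take the last zero $z<y$ before it. On $(z,y)$ we have $H$ increasing, but also $H>\mu\ge\mu(z)=H(z)$ near $z$. One then compares: $D'=H'-\mu'=cD-\mu'\le cD$. Grönwall's inequality starting from $D(z)=0$ gives $D\le0$ on $(z,y)$, a contradiction. Hence $D\le 0$ after $x_H$.

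To obtain strict negativity we need $\mu'>0$ on a set of positive measure. Strict monotonicity of $\mu$ on $(0,x^*)$ gives this, if we read ``increasing'' as strict; alternatively the Grönwall comparison can be done with the integrated form to avoid needing differentiability of $\mu$. Since $\mu$ is only locally Lipschitz, $\mu'$ exists almost everywhere, so we would phrase the argument in integral form. This is the main technical obstacle. It would be sidestepped by the explicit representation
\[
D(x)\int_x^\infty m_0=\int_x^\infty(\mu(y)-\mu(x))m_0(dy),
\]
which is decreasing in $x$ wherever $\mu$ increases, since both the integrand shrinks and mass is removed where the integrand is positive near $x$. We would try this monotonicity argument first.

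\textbf{Locating $b^*$.} From Lemma~\ref{Lemma: Uniqueb} we have $G(b^*)=H(b^*)$ with $b^*<x'$, and Lemma~\ref{Lemma: Gshape} gives $G(b^*)<\mu(b^*)$. Thus $H(b^*)<\mu(b^*)$, i.e. $D(b^*)<0$, which forces $b^*>x_H$. Combined with $b^*<x'$ this yields $b^*\in(x_H,x')$, and in particular $x_H\in(0,b^*)$.
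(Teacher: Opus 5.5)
Your proposal is correct and uses the same ingredients as the paper: the identity $H'=\frac{m_0'}{\int_x^\infty m_0(dy)}(H-\mu)$, the sign $H(0+)-\mu(0)=\overline\mu-\mu(0)>0$, the bound $H<\mu$ on $[x^*,\infty)$, and $H(b^*)=G(b^*)<\mu(b^*)$ from Lemma~\ref{Lemma: Gshape} to place $b^*$ to the right of $x_H$.

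The difference is in the uniqueness step. The paper assumes two roots $x_H^1<x_H^2$ and splits into two cases. If $H\le\mu$ between them, $H$ decreases while $\mu$ strictly increases. If $H>\mu$ between them, it shows $0=\int_{x_H^1}^\infty(\mu(y)-\mu(x_H^1))\,m_0(dy)\ge\int_x^\infty(\mu(y)-\mu(x))\,m_0(dy)>0$. That second-case inequality is exactly $\Phi(x)\le\Phi(x_H^1)$ for your $\Phi(x):=\int_x^\infty(\mu(y)-\mu(x))\,m_0(dy)$. So your monotonicity route is the paper's second case applied globally: for $x_1<x_2\le x^*$,
\[
\Phi(x_2)-\Phi(x_1)=-(\mu(x_2)-\mu(x_1))\int_{x_2}^\infty m_0(dy)-\int_{x_1}^{x_2}(\mu(y)-\mu(x_1))\,m_0(dy)<0.
\]
This single fact, together with $\Phi(0+)>0$ and $\Phi<0$ on $[x^*,\infty)$, gives the unique sign change directly. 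It needs no case split, no reduction to a constant sign between two roots, and no differentiability of $\mu$.

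So commit to this argument and drop the Gr\"onwall detour. That route is also valid: with $\mu$ locally Lipschitz, $D$ is absolutely continuous and $\frac{d}{dx}\bigl(D(x)e^{-\int_z^x c(s)\,ds}\bigr)=-e^{-\int_z^x c(s)\,ds}\mu'(x)\le 0$ a.e. But it is longer, and it is what produced the technical caveats you flagged.

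One small correction: the strictness of $H<\mu$ on $[x^*,\infty)$ comes from reading ``decreasing'' in Assumption~\ref{Ass:MainAssumptions}(ii) as strict. The condition $\lim_{x\to\infty}\mu(x)<\mu(x^*)$ alone would not exclude $\mu$ being constant on a tail.
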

\begin{proof}
Differentiation shows that the sign of the derivative 
\begin{eqnarray} \label{Eq: H'sign}
    H'(x) = \frac{m_0'(x)\left(\int_{x}^{\infty} \mu(y)m_0(dy) -\mu(x)\int_{x}^{\infty} m_0(dy)\right)}{\left(\int_{x}^{\infty} m_0(dy)\right)^2}
    = \frac{m_0'(x)\left(H(x) -\mu(x)\right)}{\int_{x}^{\infty} m_0(dy)}
\end{eqnarray}
coincides with the sign of  $H(x)-\mu(x)$.
Also, we have $H(0)-\mu(0) = \bar{\mu}-\mu(0)$, which is strictly positive due to Assumption \ref{hit 0}. Moreover, from the proof of Lemma \ref{Lemma: Uniqueb} we have that $H(b^*) - \mu(b^*) < 0$. Hence by continuity it follows that there exists $x_H \in (0,b^*)$ such that $H(x_H) = \mu(x_H)$.

Finally, we prove that $x_H$ is unique. We will proceed by contradiction. Assume that there are two roots of $H(x) - \mu(x)$, denoted $x_H^1$ and $x_H^2$, and with $x_H^1 < x_H^2$. Note that these roots lie in $(0,x^*)$, since for $x \geq x^*$ the monotonicity of $\mu$ gives $H(x)<\mu(x)$. Also, by continuity, we may assume that either $H-\mu\leq 0$ or $H-\mu>0$ in $(x_H^1,x_H^2)$.

Suppose first that $H(x)-\mu(x) \leq 0$ in $(x_H^1, x_H^2)$. Note that $H$ is decreasing on $(x_H^1, x_H^2)$ by \eqref{Eq: H'sign}. Since $\mu$ is increasing on $(0,x^*)$, we get that $H(x_H^2) = \mu(x_H^2) > \mu(x_H^1) = H(x_H^1),$ which is a contradiction. 

On the other hand, suppose that $H(x)-\mu(x) > 0$ in $(x_H^1, x_H^2)$, and fix $x \in (x_H^1, x_H^2)$. Multiplying the condition $H(x_H^1)=\mu(x_H^1)$ by $\int_{x_H^1}^\infty m_0(dy)$ and using that $\mu$ is increasing on $(0,x^*)$, we obtain
\begin{align*}
    0 & = \int_{x_H^1}^{x} \left(\mu(y)-\mu(x_H^1)\right) m_0(dy)
     + \int_{x}^{\infty} \left(\mu(y)-\mu(x_H^1)\right) m_0(dy) \\
    & \geq \int_{x}^{\infty} \left(\mu(y)-\mu(x)\right) m_0(dy)
     = \left(H(x)-\mu(x)\right)\int_{x}^{\infty} m_0(dy) > 0,
\end{align*}
which is again a contradiction. Hence the root $x_H$ is unique, so that $H > \mu$ on $(0, x_H)$ and $H < \mu$ on $(x_H, \infty)$, and by \eqref{Eq: H'sign} the function $H$ is increasing on $(0, x_H)$ and decreasing on $(x_H,\infty)$. Finally, since $H(b^*)<\mu(b^*)$, we have $b^* > x_H$, and thus $b^* \in (x_H, x')$ by Lemma~\ref{Lemma: Uniqueb}.

\end{proof}

In Figure \ref{Fig: PrototypeGH}, illustrative versions of the functions $\mu$, $G$, and $H$ are plotted with their maxima and intersection points $x^*$, $x_H$, $b^*$ and $x'$. Note that the orderings $x_H<b^*<x'$ and $x_H<x^*<x'$ are known to hold; Figure~\ref{Fig: PrototypeGH} depicts a situation in which, additionally, $b^*<x^*$.

\begin{figure} 
    \centering
    \includegraphics[width=0.85\linewidth]{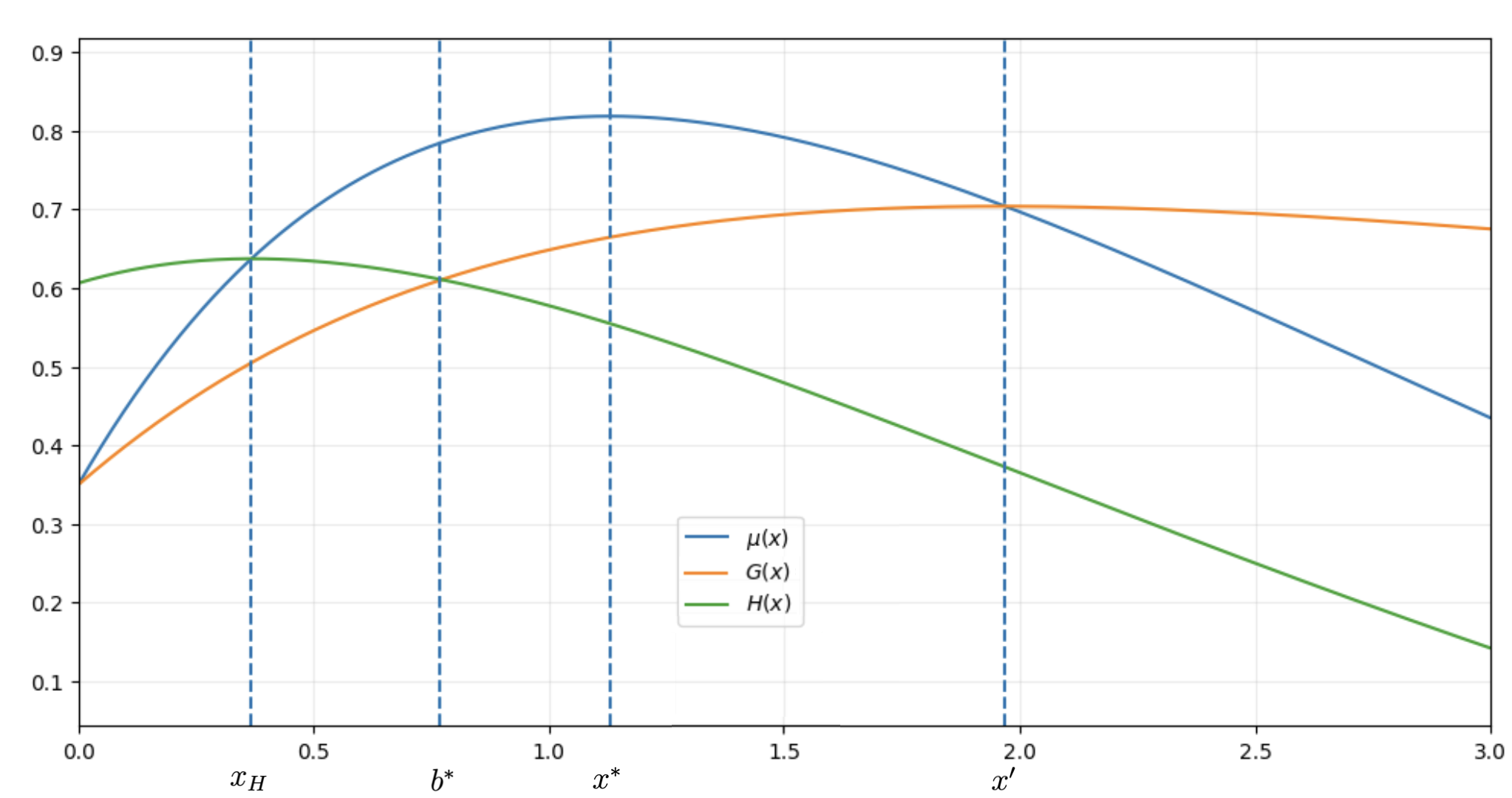}
    \caption{Schematic image of the functions $\mu$, $G$, and $H$ together with their maximum and intersection points $x^*$, $x_H$, $b^*$ and $x'$.}
    \label{Fig: PrototypeGH}
\end{figure}

\subsection{Verification theorem}

Since the proposed barrier policy $\nu=\nu^{b^*}$ is an admissible strategy, we necessarily have that 
\begin{equation}\label{ineq}
    \sup_{\nu \in \mathcal V} J(x;\nu)\geq \beta(b^*)=:\beta^*.
\end{equation}
Our next goal is to establish the opposite inequality, so that 
\[\sup_{\nu \in \mathcal V} J(x;\nu)=\beta^*\] 
and so that $\nu^{b^*}$ is an optimal strategy.

To do that, one may try to copy the arguments from Section~\ref{b=0} and construct an auxiliary function 
$u\in C^2((0,\infty))$ such that 
\begin{align*}
     &\mathcal{A}u(x) = \beta^*, & u'(x)&\geq 1, & x < b^*, \\
     &\mathcal{A}_0 u(x) = \beta^*-k(x),& u'(x)&\leq 1,  & x \geq b^*, 
\end{align*}
with $\frac{u'(0+)}{S'(0+)} = \frac{u'(\infty)}{S'_0(\infty)} =0$.
In fact, it is straightforward to check that $u$ is then given (up to an additive constant) by 
\[u'(x)=\left\{\begin{array}{ll}
1-S'(x)(G(x)-\beta^*)\int_0^x m(dy) & x<b^*\\
1+S_0'(x) (H(x)-\beta^*) \int_x^{\infty} m_0(dy) & x \geq b^*.
\end{array}\right.\]
However, the function $u$ is not necessarily lower bounded (cf. \citep{Cohen2022}), which makes it hard to use directly in a verification argument.
Instead, the next proposition defines a family of auxiliary functions $u_\beta$, where each function $u_\beta$ is lower bounded and satisfies an approximate HJB 
equation. This construction is an extension of the argument in \citep{LiangLiuZervos2025}, accommodating for the current setting of a restricted control rate.

For this, let $\overline\beta=\sup_bH(b)$, and
define the function $\delta: [\beta^*, \overline{\beta}] \to [x_H, b^*]$ as the unique solution of 
$H(\delta(\beta)) = \beta$ with $\delta(\overline{\beta}) \geq  x_H$, and note that $\delta(\beta)\leq b^*$ and 
$\delta(\beta^*) = b^*$ (see Lemma \ref{Lemma: Hshape} and Figure \ref{Fig: PrototypeGH} to see that $\delta$ is well-defined).

\begin{proposition}\label{Prop:Approx:Solution}
    Given any $\beta \in (\beta^*, \overline{\beta})$, there exists a unique point $\alpha(\beta) \in (0, \delta(\beta))$, such that the unique (up to a constant) continuous function $u_\beta$ defined by
    \begin{equation*}
        u'_{\beta}(x) = 
        \begin{cases}
            1, & x \in (0, \alpha(\beta)], \\
            1+S'(x)\int^{\delta(\beta)}_{x} (\mu(y) - \beta)m(dy), & x \in (\alpha(\beta), \delta(\beta)), \\
            1+S_0'(x) \int_x^{\infty} (\mu(y)-\beta) m_0(dy), & x \in [\delta(\beta), \infty),
        \end{cases}
    \end{equation*}
    is in $\mathcal{C}^1((0,\infty))\cap \mathcal{C}^2((0,\infty)\setminus\{\alpha(\beta),\delta(\beta)\})$.

    Furthermore, $u_{\beta}$ satisfies
  \begin{align} \label{Eq: u'betalower}
       \mathcal A u_\beta(x)&\leq \beta,  &  u'_{\beta}(x) &\geq 1,   & x \in (0, \delta(\beta)), \\
       \mathcal A_0u_\beta(x) &=\beta-k(x), &  u'_{\beta}(x) &\leq 1,  & x \in [\delta(\beta), \infty).  
      \label{Eq: u'betaupper}
    \end{align}
\end{proposition}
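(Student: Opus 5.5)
The plan is to treat $\alpha(\beta)$ as the zero of the function
\[
\varphi(x):=\int_x^{\delta(\beta)}(\mu(y)-\beta)\,m(dy), \qquad x\in(0,\delta(\beta)],
\]
so that the middle branch reads $u_\beta'(x)=1+S'(x)\varphi(x)$. The proof then splits into three parts: the regularity of $u_\beta$ (which forces the choice of $\alpha(\beta)$), a sign analysis of $\varphi$ (which gives existence and uniqueness of $\alpha(\beta)$ together with $u_\beta'\geq 1$ on the middle piece), and a generator computation on each piece.

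\emph{Regularity.} At $\delta(\beta)$ the middle branch equals $1+S'(\delta)\varphi(\delta)=1$. The last branch equals
\[
1+S_0'(\delta)\bigl(H(\delta)-\beta\bigr)\int_\delta^\infty m_0(dy)=1,
\]
because $H(\delta(\beta))=\beta$. Hence $u'_\beta$ is continuous at $\delta(\beta)$. At $\alpha(\beta)$, continuity of $u'_\beta$ holds exactly when $\varphi(\alpha(\beta))=0$. Away from these two points each branch is $C^1$, so $u_\beta\in C^1\cap C^2$ off $\{\alpha,\delta\}$. This reduces the existence and uniqueness of $\alpha(\beta)$ to the claim that $\varphi$ has exactly one zero in $(0,\delta(\beta))$, and at that zero we additionally want $\varphi>0$ to its right.

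\emph{Sign analysis of $\varphi$ (main step).} Since $\beta\in(\beta^*,\overline\beta)$, we have $\delta(\beta)\in(x_H,b^*)$. By Lemma~\ref{Lemma: Hshape}, $\mu(\delta)>H(\delta)=\beta$ there. We also have $\mu(0)=G(0+)<G(b^*)=\beta^*<\beta$, using that $G$ is increasing on $(0,x')$ (Lemma~\ref{Lemma: Gshape}) and that $G(b^*)=\beta^*$.

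By unimodality of $\mu$, the set $\{\mu>\beta\}$ is an interval $(a_1,a_2)$ containing $\delta$, with $a_1>0$, and $\mu<\beta$ on $(0,a_1)$. Since $\varphi'(x)=-(\mu(x)-\beta)m'(x)$ and $\varphi(\delta)=0$, it follows that $\varphi>0$ on $[a_1,\delta)$ and that $\varphi$ is strictly increasing on $(0,a_1)$. Moreover,
\[
\varphi(0+)=\bigl(G(\delta)-\beta\bigr)\int_0^\delta m(dy)<0,
\]
since $G(\delta)<G(b^*)=\beta^*<\beta$. So there is a unique $\alpha(\beta)\in(0,a_1)$ with $\varphi(\alpha)=0$, and $\varphi>0$ on $(\alpha,\delta)$. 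The positivity immediately gives $u_\beta'\geq 1$ on $(\alpha,\delta)$, and $u_\beta'=1$ trivially on $(0,\alpha]$. I expect this step to be the main obstacle. The crucial inputs are that $\mu(\delta)>\beta$ and that $G(\delta)<\beta$, both of which rest on Lemmas~\ref{Lemma: Gshape}--\ref{Lemma: Hshape}.

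\emph{Generator identities.} Here I would use $\mathcal A f=\frac{1}{m'}\bigl(\frac{f'}{S'}\bigr)'$ and $\mathcal A_0 f=\frac{1}{m_0'}\bigl(\frac{f'}{S_0'}\bigr)'$, where $\mathcal A_0=\mathcal A-k\,\frac{d}{dx}$, together with $(1/S')'=\mu m'$ and $(1/S_0')'=(\mu-k)m_0'$.
\begin{enumerate}
\item On $(0,\alpha]$: $u_\beta''=0$, so $\mathcal A u_\beta=\mu\leq\beta$, because $\alpha<a_1$.
\item On $(\alpha,\delta)$: $\bigl(1/S'+\varphi\bigr)'=\mu m'-(\mu-\beta)m'=\beta m'$, so $\mathcal A u_\beta=\beta$.
\item On $[\delta,\infty)$: $\bigl(1/S_0'+\int_x^\infty(\mu-\beta)\,dm_0\bigr)'=(\mu-k)m_0'-(\mu-\beta)m_0'=(\beta-k)m_0'$, so $\mathcal A_0u_\beta=\beta-k$.
\end{enumerate}
Finally, on $[\delta,\infty)$ the inequality $u_\beta'\leq 1$ is equivalent to $H(x)\leq\beta$. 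This holds because $H$ is decreasing on $(x_H,\infty)$ by Lemma~\ref{Lemma: Hshape}, $\delta(\beta)>x_H$, and $H(\delta)=\beta$.
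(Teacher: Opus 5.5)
Your proposal is correct and follows essentially the same route as the paper. Your $\varphi$ is the paper's $f$; existence and uniqueness of $\alpha(\beta)$ come from $\varphi(\delta(\beta))=0$, from $\varphi'=-(\mu-\beta)m'$ changing sign exactly once (at your $a_1$, the paper's $x_f$), and from $\varphi(0+)=(G(\delta(\beta))-\beta)\int_0^{\delta(\beta)}m(dy)<0$; the inequalities then follow from the sign of $\varphi$ and the monotonicity of $H$ on $(x_H,\infty)$, with you supplying details the paper leaves implicit (why $G(\delta(\beta))<\beta$, and the generator identities via $\mathcal A f=\frac{1}{m'}\bigl(f'/S'\bigr)'$).
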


\begin{proof}
    Let $\beta \in (\beta^*, \overline{\beta})$. 
    We first note that $u_{\beta}$ satisfies $\mathcal{A}u_{\beta} = \beta$ on $(\alpha(\beta), \delta(\beta))$ and $\mathcal{A}_0u_{\beta} = \beta-k$ on $(\delta(\beta), \infty)$. Moreover, for $x\geq\delta(\beta)$ we have 
    \[u_{\beta}'(x) = 1+S_0'(x)(H(x)-\beta)\int_x^\infty m_0(dy),\]
    so $u_{\beta}'(\delta(\beta)) = 1$.
    Hence, the continuous differentiability only requires us to find the function $\alpha$ such that
    \begin{equation} \label{Eq: alphadef}
        \int_{\alpha(\beta)}^{\delta(\beta)} (\mu(y)-\beta) m(dy) = 0.
    \end{equation}
    Consider the function
    \begin{equation*}
       f(x) :=  \int_{x}^{\delta(\beta)} (\mu(y)-\beta) m(dy).
    \end{equation*}
    Then $f(\delta(\beta)) = 0$, and differentiation shows that $f'(x) =-(\mu(x)-\beta)m'(x)$. Since $\delta(\beta) > x_H$, there exists a unique turning point $x_f < x_H$ so that $f$ is increasing on $(0,x_f)$ and then decreasing on $(x_f,\delta(\beta))$. Furthermore,
    \begin{equation*}
        f(0) =  \int_{0}^{\delta(\beta)} (\mu(y)-\beta) m(dy) = (G(\delta(\beta))-\beta) \int_0^{\delta(\beta)}m(dy) < 0.
    \end{equation*}
    Thus by continuity we get the existence and uniqueness of the required $\alpha(\beta)$. 

    Moreover, note that $\mu(\alpha(\beta))\leq \beta$, so 
    \[\mathcal A u_\beta(x)=\mu(x)\leq\mu(\alpha(\beta))\leq \beta\]
    for $x\in(0,\alpha(\beta))$.

    It remains to establish the second inequalities in \eqref{Eq: u'betalower} and \eqref{Eq: u'betaupper}. Clearly, again by definition of $\delta$,
    \begin{equation*}
        u_{\beta}'(x)- 1  = S_0'(x) (H(x)-\beta) \int_x^{\infty} m_0(dy) \leq 0, \quad \text{for all } x \in [\delta(\beta),\infty).
    \end{equation*}
    Similarly,  
    \begin{equation*}
        u_{\beta}'(x)-1  = S'(x)\int^{\delta(\beta)}_{x} (\mu(y)-\beta)m(dy)= S'(x)f(x) \geq 0 
    \end{equation*}
    for $x \in (\alpha(\beta),\delta(\beta)]$.
\end{proof}

We are now ready to show that the inequality in \eqref{ineq} is, in fact, an equality.

\begin{theorem}[Verification]
Let Assumptions \ref{Ass:MainAssumptions} and \ref{hit 0} hold. Suppose that the candidate pair $(\beta^*,b^*)$ is the unique solution to
\begin{equation*}
    \beta^*=G(b^*)=H(b^*), \quad b^* \in (x_H,x').
\end{equation*}
Then 
\begin{equation*}
  \sup_{\nu \in \mathcal V}  \liminf_{T \to \infty}\frac{1}{T} \mathbb{E}_x \left[ \int_0^T d \nu_t \right] = \beta^*.
\end{equation*}
\end{theorem}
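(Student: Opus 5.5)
The lower bound $\sup_\nu J(x;\nu)\geq\beta^*$ is already recorded in \eqref{ineq}, so only the upper bound needs proof. The plan is to show that $J(x;\nu)\leq\beta$ for every admissible $\nu$ and every $\beta\in(\beta^*,\overline\beta)$, and then let $\beta\downarrow\beta^*$. We will use the functions $u_\beta$ from Proposition~\ref{Prop:Approx:Solution} and mimic the proof of Theorem~\ref{verification-always}.

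Fix $\beta$ and an admissible $\nu$, and set $T_n=T\wedge\tau_n$ with $\tau_n$ the exit time of $X^\nu$ from $(1/n,n)$. The function $u_\beta$ is $\mathcal C^1$ with $u_\beta'$ absolutely continuous and only two kinks in $u_\beta''$. Itô's formula (in its Itô–Tanaka or generalized form) therefore applies, and the local time terms vanish. This gives
\[
\mathbb E_x[u_\beta(X^\nu_{T_n})]-u_\beta(x)=\mathbb E_x\Big[\int_0^{T_n}\mathcal A u_\beta(X^\nu_s)\,ds-\int_0^{T_n}u_\beta'(X^\nu_s)\,d\nu_s\Big].
\]
The key pointwise inequality is
\[
\mathcal Au_\beta(y)\,dt-u_\beta'(y)\,d\nu_t\leq \beta\,dt-d\nu_t
\]
for all $y>0$ and all $0\leq d\nu_t\leq k(y)\,dt$. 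On $(0,\delta(\beta))$ it follows from $\mathcal Au_\beta\leq\beta$ and $u_\beta'\geq1$. On $[\delta(\beta),\infty)$ we write $\mathcal Au_\beta=\beta-k+ku_\beta'$, so the left side equals $\beta\,dt-d\nu_t+(u_\beta'-1)(k\,dt-d\nu_t)$, and the last term is $\leq0$ because $u_\beta'\leq1$ there. Together these give
\[
\mathbb E_x[u_\beta(X^\nu_{T_n})]-u_\beta(x)+\mathbb E_x[\nu_{T_n}]\leq\beta T.
\]

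Next comes the lower bound on $u_\beta$, which is the reason for using $u_\beta$ instead of the exact candidate $u$. Since $u_\beta'=1$ on $(0,\alpha(\beta)]$, $u_\beta$ is bounded below near $0$. On $[\delta(\beta),\infty)$ we have $u_\beta'(x)=1+S_0'(x)(H(x)-\beta)\int_x^\infty m_0(dy)$. By Lemma~\ref{Lemma: Hshape}, $H$ is decreasing past $x_H$, so $H(x)-\beta<0$ there, and hence $u_\beta'$ may be negative. One must therefore show that $u_\beta(x)$ stays bounded below as $x\to\infty$, or at least grows no worse than something controlled. Since $H(x)\to\mu(\infty)<0$, $u_\beta'(x)\approx 1-(\beta-\mu(\infty))S_0'(x)\int_x^\infty m_0(dy)$, and the behaviour at infinity hinges on the quantity $S_0'(x)\int_x^\infty m_0$. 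I would show that this is integrable or bounded using that $\infty$ is natural, and I expect this to be the main obstacle, possibly requiring a truncation or a further argument. If global boundedness below fails, the fallback is to use $u_\beta(X^\nu_T)\geq u_\beta(x)-C-\int_0^T(\dots)$ together with the comparison $X^\nu\leq X$ pathwise, since harvesting only decreases the state. By this comparison and the positive recurrence of the uncontrolled $X$ (stationary law $\pi$), we get $\mathbb E_x[|u_\beta^-(X^\nu_T)|]/T\to0$ whenever $u_\beta^-$ is monotone at infinity and $\pi$-integrable.

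Granting a lower bound $u_\beta\geq -C$, we let $n\to\infty$ and use Fatou's lemma on the $u_\beta$ term and monotone convergence on $\nu$. This yields $\mathbb E_x[\nu_T]\leq\beta T+u_\beta(x)+C$. Dividing by $T$ and taking $\liminf$ gives $J(x;\nu)\leq\beta$. Finally, letting $\beta\downarrow\beta^*$ and combining with \eqref{ineq} shows that the supremum equals $\beta^*$ and that $\nu^{b^*}$ is optimal.
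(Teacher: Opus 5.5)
\textbf{Gap: the lower bound on $u_\beta$.} Your architecture is the paper's: the same $u_\beta$ from Proposition~\ref{Prop:Approx:Solution}, the same pointwise inequality (writing $\mathcal A u_\beta=\beta-k+k u_\beta'$ on $[\delta(\beta),\infty)$ and using $d\nu\le k\,dt$), localization, Fatou, and $\beta\downarrow\beta^*$. But you only grant the boundedness from below of $u_\beta$, and that is the one step that justifies replacing $u$ by $u_\beta$. (The paper's proof also states ``since $u_\beta$ is bounded from below'' without argument, but it is not automatic.)

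The routes you sketch do not close the gap.
\begin{itemize}
\item $S_0'(x)\int_x^\infty m_0(dy)$ is in general not integrable at infinity. With constant $\sigma$ and $\mu-k\equiv -c$ near infinity it tends to $1/c$.
\item Boundedness of that quantity only gives $u_\beta(x)\ge -C(1+x)$, which is not enough for Fatou.
\item The fallback via $X^\nu\le X$ would require $\mathbb E_x[u_\beta^-(X_T)]=o(T)$. $\pi$-integrability alone does not give this.
\item Your approximation $u_\beta'\approx 1-(\beta-\mu(\infty))S_0'(x)\int_x^\infty m_0(dy)$ discards exactly the information that fixes the sign of $u_\beta'$.
\end{itemize}

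\textbf{The missing step.} Apply \eqref{Eq:ScaleDriftInt} to $X^0$ and use $S_0'(\infty)=\infty$. This gives
\[
\int_x^\infty (k(y)-\mu(y))\,m_0(dy)=\frac{1}{S_0'(x)},
\]
and all integrals here are finite. Substituting this for the constant $1$ in $u_\beta'$ gives, for $x\ge\delta(\beta)$,
\[
u_\beta'(x)=S_0'(x)\int_x^\infty (k(y)-\beta)\,m_0(dy)=S_0'(x)\int_x^\infty m_0(dy)\,\bigl(\overline k(x)-\beta\bigr),
\]
where
\[
\overline k(x):=\frac{\int_x^\infty k(y)\,m_0(dy)}{\int_x^\infty m_0(dy)}.
\]
The argument then runs as follows.
\begin{itemize}
\item Since $u_\beta'(\delta(\beta))=1>0$, we get $\overline k(\delta(\beta))>\beta$.
\item Because $k$ is nondecreasing, its tail average $\overline k$ is nondecreasing. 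Hence $u_\beta'\ge 0$ on $[\delta(\beta),\infty)$, which is the analogue of $u'\ge0$ in Lemma~\ref{Lip}.
\item Combined with $u_\beta'\ge 1$ on $(0,\delta(\beta))$, $u_\beta$ is nondecreasing on all of $(0,\infty)$.
\item Since $u_\beta'=1$ on $(0,\alpha(\beta)]$, $u_\beta(0+)=u_\beta(\alpha(\beta))-\alpha(\beta)$ is finite.
\end{itemize}
Therefore $u_\beta\ge u_\beta(0+)>-\infty$. With this inserted, your Fatou/monotone-convergence step and the limit $\beta\downarrow\beta^*$ go through as you wrote them.
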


\begin{proof}
Fix $\beta \in (\beta^*, \overline{\beta})$ and $\nu \in \mathcal V$. For $\rho > 0$ define the almost surely finite stopping times
\begin{equation*}
    \tau_\rho:=\inf\{t\ge 0: X_t^\nu \geq \rho\}, \quad \tau:= T \wedge \tau_\rho.
\end{equation*}
Since $u_\beta \in C^1(0,\infty)$ and $u_\beta$ is $C^2$ on $(0,\alpha(\beta))$, $(\alpha(\beta),\delta(\beta))$ and $(\delta(\beta),\infty)$, a generalized It\^o formula yields
\begin{align*}
    u_\beta(X^\nu_{\tau})
    &=u_\beta(x)
    +\int_0^{\tau}\mathcal A u_\beta(X_s^\nu) ds
    +\int_0^{\tau}\sigma(X_s^\nu)u'_\beta(X_s^\nu) dW_s
    -\int_0^{\tau}u'_\beta(X_s^\nu) d\nu_s .
\end{align*}
Taking expectations gives
\begin{equation} \label{Eq:Itoub}
    \mathbb{E}_x [u_\beta(X^\nu_{\tau})]
    =u_\beta(x)+\mathbb{E}_x \left[\int_0^{\tau} \mathcal A u_\beta(X_s^\nu) ds \right]
    -\mathbb{E}_x \left[\int_0^{\tau} u'_\beta(X_s^\nu) d\nu_s \right].
\end{equation}
By construction of $u_\beta$ we have
\begin{equation*}
    \mathcal A u_\beta(x) = 
    \begin{cases}
        \mu(x), & x\in(0,\alpha(\beta)), \\
        \beta, & x\in(\alpha(\beta),\delta(\beta)), \\
        \beta+(u'_\beta(x)-1)k(x), &  x \in [\delta(\beta), \infty).
    \end{cases}
\end{equation*}
Since $\mu(x) \leq \beta$ on $(0, \alpha(\beta))$, we find using the above that
\begin{align*}
    \mathbb{E}_x[ u_\beta(X^\nu_{\tau}) ]
    & \leq u_\beta(x) - \mathbb{E}_x \left[\int_0^{\tau}u'_\beta(X_s^\nu) d\nu_s \right] \\
    & + \beta \mathbb{E}_x [\tau] 
    + \mathbb{E}_x \left[ \int_0^{\tau}  (u'_{\beta}(X_s^\nu)-1)k(X_s^\nu) \mathbf{1}_{[\delta(\beta),\infty)}(X_s^\nu) ds \right].
\end{align*}
Using the inequality $u_{\beta}'(x) \leq 1$ on $[\delta(\beta), \infty)$ and the bound $d\nu_s\leq k(X_s^\nu) ds$, we observe that
\begin{align*}
    \mathbb{E}_x[ u_\beta(X^\nu_{\tau}) ]
    & \leq u_\beta(x) - \mathbb{E}_x \left[\int_0^{\tau} (u'_\beta(X_s^\nu) - (u'_{\beta}(X_s^\nu)-1)
    \mathbf{1}_{[\delta(\beta),\infty)}(X_s^\nu)
   )d\nu_s \right]
    + \beta \mathbb{E}_x [\tau]. 
\end{align*}
Note that $\mathbf{1}_{[\delta(\beta),\infty)}(x)(1-u'(x)) + u'(x) \geq 1$, which gives
\begin{align*}
    \mathbb{E}_x[ u_\beta(X^\nu_{\tau}) ]
    \leq u_\beta(x)+ \beta \mathbb{E}_x [\tau] - \bigg[\int_0^{\tau}  d \nu_s \bigg]. 
\end{align*}
Divide by $T$ (recall $\tau=T\wedge\tau_\rho\le T$), and let first $\rho\to\infty$ and then $T\to\infty$ to obtain
\begin{align*}  
    \liminf_{T \to \infty}\frac{1}{T}\mathbb{E}_x[u_{\beta}(X_{T}^{\nu})] +  \liminf_{T \to \infty}\frac{1}{T}\mathbb{E}_x \bigg[\int_0^{T}  d \nu_s \bigg] & \leq  \beta . 
\end{align*}
Since $u_{\beta}$ is bounded from below, we find that
\begin{equation*}
    \liminf_{T \to \infty} \frac{1}{T} \mathbb{E}_x \left[\int_0^T d\nu_s \right] \leq \beta.
\end{equation*}
Finally, since this holds for any $\beta \in (\beta^*,\overline\beta)$,
the result follows.
\end{proof}

\section{Comparative analysis}\label{Sec:Comparative}

It is natural to expect that a more lenient harvesting bound should allow the controller to keep the population untouched at a higher threshold. Indeed, when harvesting can be carried out more aggressively, the agent can afford to wait until the stock reaches a larger level before intervening, because any excess can then be removed more rapidly. In this sense, a higher harvesting bound should move the optimal threshold upward, and ultimately toward the singular threshold $x'$.

The next proposition confirms this intuition by showing that if the bound on the harvesting rate is increased, then the corresponding optimal threshold becomes larger.

\begin{proposition} \label{Prop: k1k2ordering}
    Let $k_1,k_2: (0,\infty) \to (0,\infty)$ be two extraction bounds satisfying Assumptions~\ref{Ass:MainAssumptions} and \ref{hit 0}, with $k_1(x) \leq k_2(x)$ for all $x \geq 0.$
    
    For $i=1,2$, let
    \begin{equation*}
        H_i(x) := \frac{\int_x^\infty \mu(y)m_{i}(dy)}{\int_x^\infty m_{i}(dy)},
    \end{equation*}
    where $m_{i}$ is the speed measure corresponding to the drift $\mu-k_i$, and let $b_i$ denote the unique positive solution of $G(b_i) = H_i(b_i).$ Then $b_1 \leq b_2.$
\end{proposition}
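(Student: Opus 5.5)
The plan is to reduce the ordering $b_1\le b_2$ to a pointwise comparison of $H_1$ and $H_2$ at the single point $b_1$. We then prove that comparison with a likelihood-ratio argument on $[b_1,\infty)$.

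\emph{Step 1 (reduction).} Apply Lemma~\ref{Lemma: Uniqueb} to $k_2$. It gives $G(0+)=\mu(0)<\overline\mu_2=H_2(0+)$, and it says that $G=H_2$ has the unique root $b_2$. By continuity, $G<H_2$ on $(0,b_2)$ and $G>H_2$ on $(b_2,\infty)$. It therefore suffices to show
\[
G(b_1)\le H_2(b_1).
\]
Since $G(b_1)=H_1(b_1)$, this is the same as $H_1(b_1)\le H_2(b_1)$. In words: more aggressive harvesting raises the conditional average of $\mu$ over $[b_1,\infty)$.

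\emph{Step 2 (likelihood ratio).} On $[x,\infty)$, the measures $m_1$ and $m_2$ both have densities with respect to $m$. Up to constants that cancel in $H_i$, these densities are $\exp\bigl(-\int_x^y \frac{2k_i(z)}{\sigma^2(z)}dz\bigr)$. Hence $m_2(dy)=c\,L(y)\,m_1(dy)$ with
\[
L(y)=\exp\Bigl(-\int_x^y \tfrac{2(k_2(z)-k_1(z))}{\sigma^2(z)}dz\Bigr).
\]
This $L$ satisfies $L(x)=1$, $0<L\le 1$, and $L$ is non-increasing because $k_2\ge k_1$. Consequently
\[
H_2(x)-H_1(x)=\frac{\int_x^\infty(\mu(y)-H_1(x))L(y)\,m_1(dy)}{\int_x^\infty L(y)\,m_1(dy)}.
\]
Set $\varphi(y)=\int_x^y(\mu(z)-H_1(x))\,m_1(dz)$. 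Then $\varphi(x)=0$, and $\varphi(\infty)=0$ by the definition of $H_1(x)$. Integrating by parts gives
\[
\int_x^\infty(\mu-H_1(x))L\,dm_1=\int_x^\infty\varphi(y)\,(-dL(y)).
\]
The boundary terms vanish because $L$ is bounded and $\varphi\to0$ at both ends. Since $-dL\ge 0$, it suffices to have $\varphi\ge0$ on $[x,\infty)$.

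\emph{Step 3 (sign of $\varphi$ at $x=b_1$).} This is the crux. By Lemma~\ref{Lemma: Hshape} applied to $k_1$, we have $b_1>x_{H,1}$, hence $\mu(b_1)>H_1(b_1)$. Take $c=H_1(b_1)$. The function $\mu$ is unimodal and $\mu(b_1)>c$, while $c>\mu(\infty)$ (because $H_1$ is a weighted average of $\mu$ that exceeds its limit). So on $[b_1,\infty)$ the function $\mu-c$ is positive up to a single crossing point and negative afterwards. If $b_1<x^*$, $\mu$ first increases, which keeps it above $c$; after $x^*$ it decreases through $c$ exactly once.

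Therefore $\varphi'=(\mu-c)m_1'$ is positive and then negative, so $\varphi$ first increases and then decreases. Together with $\varphi(b_1)=\varphi(\infty)=0$, this forces $\varphi\ge0$. Hence $H_2(b_1)\ge H_1(b_1)=G(b_1)$, and Step 1 yields $b_1\le b_2$.

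The main point needing care is the sign bookkeeping in this integration by parts. The paper's uniqueness and shape lemmas supply both ingredients used there: the single crossing of $G-H_2$ in Step 1, and the inequality $\mu(b_1)>H_1(b_1)$ in Step 3.
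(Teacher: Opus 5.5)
Your proof is correct. Your Step 1 reduction matches the paper's final step, but you prove the key inequality $H_2(b_1)\ge H_1(b_1)$ by a different argument.

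The paper works with the whole function $\Delta=H_2-H_1$:
\begin{itemize}
\item With the hazard rates $\lambda_i(x)=m_i'(x)/\int_x^\infty m_i(dy)$ it derives $\Delta'-\lambda_2\Delta=(\lambda_2-\lambda_1)(H_1-\mu)$.
\item It shows $\lambda_2\ge\lambda_1$ from the monotonicity of the density ratio $q$.
\item It uses $H_1\le\mu$ on all of $[x_H^1,\infty)$ (Lemma~\ref{Lemma: Hshape}) to see that $\Delta(x)\int_x^\infty m_2(dy)$ is nonincreasing there.
\item A separate limit argument, bounding $\int_x^\infty m_2(dy)/\int_x^\infty m_1(dy)$ by $q(x_H^1)$, shows this product vanishes at infinity, so $\Delta\ge 0$ on $[x_H^1,\infty)$.
\end{itemize}

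You instead fix the point $b_1$. You write $H_2(b_1)-H_1(b_1)$ as the $m_1$-integral of $\mu-H_1(b_1)$ reweighted by the non-increasing likelihood ratio $L$, and integrate by parts. The sign then reduces to $\varphi\ge 0$, which you read off from the unimodality of $\mu$ and the single inequality $\mu(b_1)>H_1(b_1)$. This is the classical single-crossing/likelihood-ratio argument, and it buys several things:
\begin{itemize}
\item It needs no differential inequality.
\item The boundary term at infinity is trivial because $0<L\le 1$.
\item It uses information about $H_1$ at only one point. You could even get $\mu(b_1)>G(b_1)=H_1(b_1)$ from $b_1<x'$ and Lemma~\ref{Lemma: Gshape}, bypassing Lemma~\ref{Lemma: Hshape}.
\item It loses no generality: it applies verbatim at every $x$ with $\mu(x)\ge H_1(x)$, i.e.\ on the same set $[x_H^1,\infty)$ where the paper proves $H_2\ge H_1$.
\end{itemize}

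Minor points:
\begin{itemize}
\item The claim $c>\mu(\infty)$ in Step 3 is unnecessary. If $\mu-c$ never turned negative, $\varphi$ would simply be nondecreasing from $\varphi(b_1)=0$.
\item The letter $c$ is used both for the normalising constant in Step 2 and for $H_1(b_1)$ in Step 3.
\item In Step 1, positivity of $G-H_2$ on $(b_2,\infty)$ does not follow from continuity alone. It needs continuity together with $G>\mu>H_2$ on $(x',\infty)$, from the proof of Lemma~\ref{Lemma: Uniqueb}.
\end{itemize}
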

\begin{proof}
    Let 
    \begin{equation*}
       \lambda_i(x) := \frac{m_{i}'(x)}{\int_x^\infty m_{i}(dy)}\quad\& \quad  \Delta(x) := H_2(x)-H_1(x).
    \end{equation*}
By the definition of the speed measure we have $m_{2}'(y) = q(y)m_{1}'(y),$ where
    \begin{equation*}
        q(y) = \exp{\left(-\int^{y}_{c_0}\frac{2(k_2(z)-k_1(z))}{\sigma^2(z)}dz \right)}.
    \end{equation*}
    As $k_2(z)-k_1(z) \geq 0$ for all $z>0$, the function $q$ is nonincreasing. Therefore we find
    \begin{equation}\label{lambdaineq}
        \lambda_2(x) = \frac{q(x)m_{1}'(x)}{\int_x^\infty q(y)m_{1}'(y)dy} \geq \frac{m_{1}'(x)}{\int_x^\infty m_{1}'(y)dy} = \lambda_1(x).
    \end{equation}
    
    Next, differentiation yields $H_i'(x) =  \lambda_i(x) \big(H_i(x)-\mu(x)\big)$, and thus
    \begin{equation*}
        \Delta'(x)-\lambda_2(x)\Delta(x) = \big( \lambda_2(x)-\lambda_1(x)\big)\big(H_1(x)-\mu(x) \big).
    \end{equation*}
    Let $x_H^1$ be the point at which $H_1-\mu$ changes sign, cf. Lemma~\ref{Lemma: Hshape}. Then
    \begin{equation*}
        H_1(x)-\mu(x) \leq 0 \quad \text{ for all } x \geq x_H^1.
    \end{equation*}
    Combining this with \eqref{lambdaineq}, we obtain for all $x \geq x_H^1$ that 
    \begin{equation} \label{Eq: DeltaIneq}
        \Delta'(x)-\lambda_2(x)\Delta(x) \leq 0.
    \end{equation}
    Consequently,
    \begin{equation*}
        \frac{d}{dx} \left( \Delta(x)\int_x^\infty m_{2}(dy) \right) = \int_x^\infty m_{2}(dy) \big(\Delta'(x)-\lambda_2(x)\Delta(x)\big) \leq 0,
    \end{equation*}
    and thus the function $x \mapsto \Delta(x) \int_x^\infty m_{2}(dy)$ is nonincreasing on $[x_H^1,\infty)$. 
    
    Now, since $q$ is nonincreasing, the ratio 
    \begin{equation*}
        \frac{\int_x^\infty m_{2}(dy)}{\int_x^\infty m_{1}(dy)}
        =\frac{\int_x^\infty q(y)m_{1}(dy)}{\int_x^\infty m_{1}(dy)}
    \end{equation*}
    is bounded by $q(x_H^1)$ on $[x_H^1, \infty)$. Noting that 
    \[\int_x^\infty \mu(y)m_{i}(dy) \to 0\] 
    as $x \to \infty$, it follows that
    \begin{equation*}
        \Delta(x)\int_x^\infty m_{2}(dy) = 
        \int_x^\infty \mu(y)m_{2}(dy)-\int_x^\infty \mu(y)m_{1}(dy)(x)\frac{\int_x^\infty m_{2}(dy)}{\int_x^\infty m_{1}(dy)} \to 0
    \end{equation*}
    as $x \to \infty$. Hence, we have 
    \begin{equation*}
        \Delta(x) \int_x^\infty m_{2}(dy) \geq 0 \quad \text{ for all } x \geq x_H^1,
    \end{equation*}
    which implies
    \begin{equation*}
        \Delta(x) \geq 0 \quad \text{ for all } x \geq x_H^1.
    \end{equation*}
    Consequently, $H_2(x) \geq H_1(x)$ for all $ x \geq x_H^1$.
    
    Finally, since the unique root $b_1 \in (x_H^1,x')$, we have 
    \begin{equation*}
        H_2(b_1) \geq H_1(b_1) = G(b_1).
    \end{equation*}
    As the equation $G(x) = H_2(x)$ has the unique solution $b_2$, and since $G(x)-H_2(x)$ changes sign from negative to positive at that point, it follows that $b_1 \le b_2.$
\end{proof}

If the maximal harvesting rate becomes very large, then harvesting is no longer effectively constrained by the rate bound. Biologically, this means that whenever the population exceeds the desired level, the controller can remove biomass almost instantaneously. One should therefore expect the bounded-rate harvesting problem to approach the singular harvesting problem studied in \cite{Alvarez2018}, in which the population is reflected at the optimal threshold level by instantaneous extraction.

The next proposition shows that, as the harvesting bound is increased to infinity, the optimal threshold $b_M^*$ converges to the singular optimal threshold $x'$. At the same time, the optimal long-run average payoff converges to $\mu(x')$, which is exactly the asymptotic yield associated with maintaining the population at that level by the singular reflection policy.

\begin{proposition} \label{Prop: limitharvesting}
    Assume that the extraction bound is for $M\geq 1$ replaced by $k_M(x):=M k(x)$, where $k$ satisfies Assumptions~\ref{Ass:MainAssumptions} and \ref{hit 0}, and let $b_M^*$ and $\beta_M^*$ denote the corresponding optimal threshold and optimal average harvesting rate, respectively. Then
    \begin{equation*}
        \lim_{M \to \infty} b_M^* = x' \quad\&\quad \lim_{M \to \infty} \beta_M^* = \mu(x').
    \end{equation*}
\end{proposition}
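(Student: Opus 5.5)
We would exploit the monotone structure already established. By Proposition~\ref{Prop: k1k2ordering}, applied with $k_1=Mk$ and $k_2=M'k$ for $M\le M'$, the map $M\mapsto b_M^*$ is nondecreasing. Lemma~\ref{Lemma: Uniqueb} gives $b_M^*\in(0,x')$ for every $M$, where $x'$ is defined through $G$ alone and does not depend on $M$. Hence $b_M^*\uparrow \bar b$ for some $\bar b\in(0,x']$. We must show $\bar b=x'$. Since $\beta_M^*=G(b_M^*)$ and $G$ is continuous, this also gives $\beta_M^*\to G(x')=\mu(x')$ (recall $G(x')=\mu(x')$ by Lemma~\ref{Lemma: Gshape}).

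The key step is to show that $H_M(x)\to\mu(x)$ locally uniformly on $(0,\infty)$ as $M\to\infty$, where $H_M$ is $H$ built from $m_{0,M}$. Normalise with base point $x$:
\[
H_M(x)=\frac{\int_x^\infty \mu(y)\exp\left(-\int_x^y\frac{2Mk(z)}{\sigma^2(z)}dz\right)m(dy)}{\int_x^\infty \exp\left(-\int_x^y\frac{2Mk(z)}{\sigma^2(z)}dz\right)m(dy)}.
\]
For $x$ in a compact $K\subset(0,\infty)$ we have $k(z)\ge k(\min K)>0$ for $z\ge x$, since $k$ is nondecreasing and positive on $(0,\infty)$. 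The weights therefore concentrate at $y=x$ as $M\to\infty$. We would split the integrals at $x+\varepsilon$. On $[x,x+\varepsilon]$, $|\mu(y)-\mu(x)|\le\omega(\varepsilon)$ by uniform continuity. On the tail, the weight is at most $e^{-cM\varepsilon}$, where $c=\inf 2k/\sigma^2$ over $[x,x+\varepsilon]$, times $\int_{x+\varepsilon}^\infty(1+|\mu|)\,dm$. The denominator is bounded below by roughly $c'/M$ from the piece near $x$. Finiteness of $\int^\infty |\mu|\,dm$ should follow from \eqref{Eq:ScaleDriftInt} and $S'(\infty)=\infty$, because $\mu<0$ eventually, so $\int_a^\infty \mu\, dm=-1/S'(a)$ is finite. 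This gives $|H_M(x)-\mu(x)|\le \omega(\varepsilon)+C e^{-cM\varepsilon}M$ uniformly on $K$, which tends to $\omega(\varepsilon)$.

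To conclude, suppose $\bar b<x'$. On $[b_1^*,\bar b]\subset(0,x')$ we have $G<\mu$ by Lemma~\ref{Lemma: Gshape}, and this gap has a positive minimum $\eta$ on the compact interval. For large $M$, uniform convergence gives $H_M>\mu-\eta/2>G$ on that interval. In particular $H_M(b_M^*)>G(b_M^*)$, contradicting $G(b_M^*)=H_M(b_M^*)$. Hence $\bar b=x'$.

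The main obstacle is the uniform Laplace-type estimate in the key step, specifically controlling the tail integral near infinity where $\mu$ may be unbounded below. This is handled by the integrability of $\mu$ against $m$ noted above. We must also check that $k_M=Mk$ still satisfies Assumption~\ref{hit 0}(ii) for all $M\ge1$, so that $b_M^*$ is well defined. If that fails, we would note that Proposition~\ref{Prop: k1k2ordering} implicitly requires it, and treat it as part of the hypothesis.
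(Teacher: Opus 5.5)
\textbf{Gap: Assumption~\ref{hit 0} for $k_M=Mk$.} Your argument follows the same core idea as the paper: Laplace-type concentration gives $H_M\to\mu$, and then you locate the roots of $G-H_M$. The gap is that you never prove $k_M=Mk$ satisfies Assumption~\ref{hit 0} for every $M\ge 1$, and you offer to add it as a hypothesis instead. That would weaken the statement, and the step is not optional. Without it, Proposition~\ref{Prop: k1k2ordering} (your monotonicity) does not apply to $k_M$. Neither do Lemmas~\ref{Lemma: Uniqueb}--\ref{Lemma: Hshape}, which give $b_M^*\in(0,x')$ and the sign pattern of $G-H_M$. A priori, some $M$ could even fall into the regime of Section~\ref{b=0}, where the optimal threshold is $0$.

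The paper closes this with a short single-crossing argument. Write $m_M(dy)=w_M(y)\,m_1(dy)$, where $w_M(y)=\exp\bigl((1-M)\int_{c_0}^y\frac{2k(z)}{\sigma^2(z)}dz\bigr)$ is nonincreasing. Since $\mu-\mu(0)$ is positive on $(0,x_0)$ and negative on $(x_0,\infty)$, you get $(\mu(y)-\mu(0))w_M(y)\ge(\mu(y)-\mu(0))w_M(x_0)$ for every $y$. Hence
\[
\int_0^\infty(\mu(y)-\mu(0))\,m_M(dy)\ \ge\ w_M(x_0)\int_0^\infty(\mu(y)-\mu(0))\,m_1(dy)>0,
\]
that is, $\overline\mu_M>\mu(0)$. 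Part (i), finiteness of $\int_0^\infty m_M(dy)$, also needs checking; you do not mention it. It follows from the same representation: $w_M\le 1$ on $[c_0,\infty)$, and $w_M$ is bounded on $(0,c_0]$ provided $k/\sigma^2$ is integrable at $0$ (e.g.\ if $\sigma(0)>0$).

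\textbf{The rest, compared with the paper.} Granting this, the rest of your proposal is correct, and your root-convergence step is more careful than the paper's. The paper proves only pointwise convergence $H_M(x)\to\mu(x)$, normalising at $x+\delta$ rather than $x$, and then simply asserts that the roots converge. Your explicit bound $|H_M-\mu|\le\omega(\varepsilon)+CMe^{-cM\varepsilon}$ is a genuine tightening. So is your check that $\int^\infty|\mu|\,dm<\infty$ via \eqref{Eq:ScaleDriftInt} and $S'(\infty)=\infty$; the paper's tail-vanishing step leaves that integrability implicit.

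However, neither Proposition~\ref{Prop: k1k2ordering} nor local uniformity is actually needed. Once Assumption~\ref{hit 0} holds for $k_M$, $G-H_M$ is negative on $(0,b_M^*)$ and positive on $(b_M^*,\infty)$. So for any fixed $x<x'$, pointwise convergence gives $H_M(x)>G(x)$ for large $M$, which forces $x<b_M^*<x'$. That already yields $b_M^*\to x'$, and presumably this is what the paper's terse ``the roots must converge'' intends.
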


\begin{proof} 
    Since $k$ is non-decreasing and strictly positive, so is $M k$ for $M \geq 1$. Thus, $M k$ satisfies the Assumptions in \ref{Ass:MainAssumptions}. 
    
    Furthermore, we have
    \begin{equation*}
        m_M(dy) = e^{(1-M)\int^y_{c_0} \frac{2k(z)}{\sigma^2(z)}dz } m_1(dy),
    \end{equation*}
    where $m_{M}$ is the speed measure corresponding to the drift $\mu-Mk$ and $m_1$ corresponding to drift $\mu-k$. Thus, $\int_0^{\infty} m_1(dy) < \infty$ implies that $\int_0^{\infty} m_M(dy) < \infty$.

    Let $x_0 > x^*$ be the unique point such that $\mu(x_0)=\mu(0)$.
    Then $\mu(y)-\mu(0) > 0$ on $(0,x_0)$ and $\mu(y)-\mu(0) < 0$ on $(x_0, \infty)$. Since the mapping 
    \begin{equation*}
        y \mapsto e^{(1-M)\int^y_{c_0} \frac{2k(z)}{\sigma^2(z)}dz }
    \end{equation*}
    is nonincreasing, we find that
    \begin{align*}
        \int_0^\infty(\mu(y)-\mu(0))m_M(dy) & =\int_0^\infty(\mu(y)-\mu(0))e^{(1-M)\int^y _{c_0}\frac{2k(z)}{\sigma^2(z)}dz }m_1(dy) \\
        & \geq e^{(1-M)\int^{x_0}_{c_0} \frac{2k(z)}{\sigma^2(z)}dz }\int_0^\infty(\mu(y)-\mu(0))m_1(dy).
    \end{align*}
    As the last integral is strictly positive by the assumption $\mu(0) < \overline\mu_1$, we get
    \begin{equation*}
        \int_0^\infty(\mu(y)-\mu(0))m_M(dy)>0,
    \end{equation*}
    which implies $\mu(0)<\overline\mu_M$. Hence, $Mk$ also satisfies Assumption~\ref{hit 0}.

    Define
    \begin{equation*}
        H_M(x):=\frac{\int_x^\infty \mu(y)m_{M}(dy)}{\int_x^\infty m_{M}(dy)},
    \end{equation*}
    where $m_{M}$ is the speed measure corresponding to the drift $\mu-Mk$, and note that for $\delta>0$ we have
    \begin{eqnarray*}
        H_M(x) &=& \frac{\int_x^\infty \mu(y) e^{-M\int_c^y \frac{2k(z)}{\sigma^2(z)}dz} m(dy)}
        {\int_x^\infty e^{-M\int_c^y \frac{2k(z)}{\sigma^2(z)}dz}m(dy)}\\
        &=& \frac{\int_x^{x+\delta} \mu(y) e^{-M\int_{x+\delta}^y \frac{2k(z)}{\sigma^2(z)}dz} m(dy) + \int_{x+\delta}^\infty \mu(y) e^{-M\int_{x+\delta}^y \frac{2k(z)}{\sigma^2(z)}dz} m(dy)}
        {\int_x^{x+\delta} e^{-M\int_{x+\delta}^y \frac{2k(z)}{\sigma^2(z)}dz}m(dy) + \int_{x+\delta}^\infty e^{-M\int_{x+\delta}^y \frac{2k(z)}{\sigma^2(z)}dz}m(dy)}.
    \end{eqnarray*}
    Now, the two indefinite integrals both tend to 0 as $M\to\infty$, whereas the integral 
    $\\ \int_x^{x+\delta} e^{-M\int_{x+\delta}^y \frac{2k(z)}{\sigma^2(z)}dz}m(dy)$ explodes. It follows that
    \[\inf_{y\in[x,x+\delta]}\mu(y)\leq \liminf_{M\to\infty} H_M(x)\leq \limsup_{M\to\infty} H_M(x)\leq \sup_{y\in[x,x+\delta]}\mu(y).\]
    Since $\delta>0$ is arbitrary and $\mu$ is continuous, we thus have 
    \[\lim_{M\to\infty}H_M(x)=\mu(x)\]
    for all $x>0$ with $k(x)>0$.
        
    Next, recall that $b^*_M$ and $x'$ are the unique roots of $G(x)-H_M(x)$ and $G(x)-\mu(x)$, respectively, with $b^*_M\leq x'$ and 
    $G(x)<\mu(x)$ for $x<x'$. Since
    \[G(x)-H_M(x) \to G(x)-\mu(x)\] 
    as $M \to \infty$, the roots much converge, and so
    \begin{equation*}
        \lim_{M \to \infty }b_M^* = x'.
    \end{equation*}
    
    Finally, since $\beta_M^*=G(b_M^*)$ and $G$ is continuous, we obtain
    \begin{equation*}
        \beta_M^* = G(b_M^*) \xrightarrow{M\to \infty} G(x') = \mu(x').
    \end{equation*}
\end{proof}

\section{Illustration}\label{Sec:Numerical}

Consider a population whose biomass evolves according to
\begin{equation*}
    dX_t = \big(c + rX_t(1-X_t/K)\big)dt + \sigma X_t\,dW_t, \quad X_0=x>0,
\end{equation*}
where $c>0$ represents recruitment or immigration, $r>0$ is the intrinsic reproduction rate, $K>0$ scales the carrying capacity, and $\sigma>0$ measures environmental variability. For this model, the density of the speed measure is
\begin{align*}
    m'(x) = x^{-2r /\sigma^2} \exp{\left( -\frac{2r}{\sigma^2 K}x - \frac{2c}{\sigma^2 x} \right)}
\end{align*}
and the drift $\mu(x)=c+r x(1-x/K)$ attains its maximum at $x^*=K/2$,
and is increasing on $(0,x^*)$ and decreasing on $(x^*,\infty)$. Thus $\mu$ satisfies the conditions in Assumption~\ref{Ass:MainAssumptions}.

Since the first-order condition for the optimal threshold \eqref{Eq:b} cannot be solved explicitly (it includes special functions), we illustrate the results numerically. For this we take the values $c=0.25, r=1.2, K=1.0, \sigma=0.55$.

We now demonstrate following our general results that fixed population dynamics can lead to two qualitatively different optimal harvesting strategies, depending only on the bound on the harvesting rate. For this, we study the harvesting bound
\begin{equation*}
    k_M(x):=M \left( \frac{1}{10} + \frac{x}{20} \right),
\end{equation*}
where $M \geq 1$ is a constant.

We have shown in previous sections that the key condition determining the optimal policy is whether $\bar{\mu} < \mu(0)$ (leading to always extracting) or $\bar{\mu} > \mu(0)$ (leading to a positive threshold). Figure \ref{Fig:Thresholds} demonstrates this transition: the boundary is zero for small $M$ and becomes strictly positive at $M \approx 1.73$. These results are intuitive, since when $M$ is small enough, the population is effectively protected from heavy extractions and still benefits from the positive immigration $c$, leading to fast recovery for the population. In other words, it is not harmful for the population to extract all the time.

The same figure also highlights our comparative statics from Section \ref{Sec:Comparative}. Firstly, we can see that by increasing $M$ the boundary $b^*$ is increasing, agreeing with our general result in Proposition~\ref{Prop: k1k2ordering}. Secondly, following Proposition~\ref{Prop: limitharvesting}, we see that the threshold $b^*$ ultimately converges towards $x'$, which corresponds to the optimal reflection when harvesting is not restricted.

\begin{figure}
    \centering
    \includegraphics[width=0.85\linewidth]{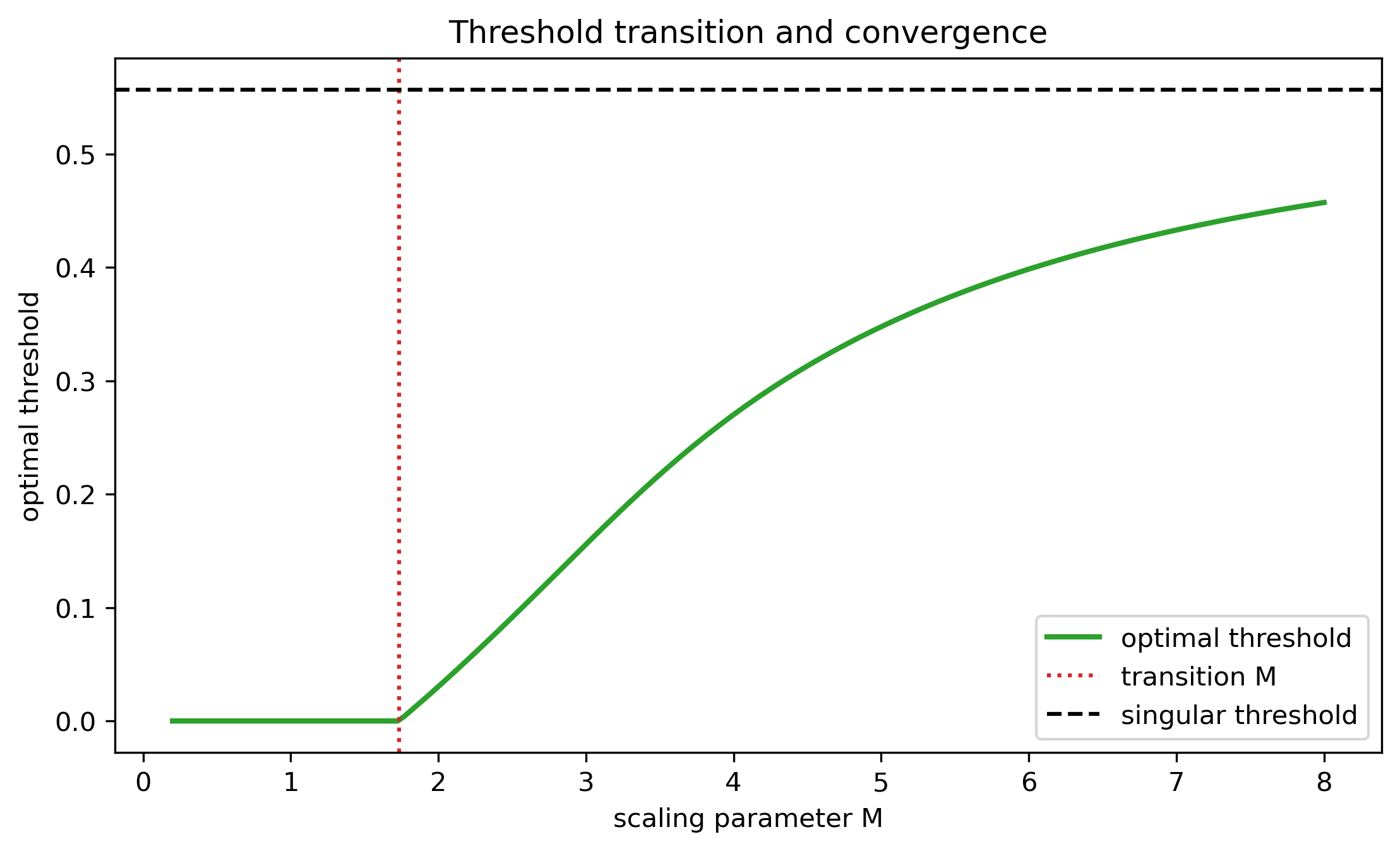}
    \caption{The optimal threshold under the harvesting bound $k_M(x)$ and its convergence to the singular reflection threshold $x'$.}
    \label{Fig:Thresholds}
\end{figure}

Figure~\ref{Fig:Densities} shows the biomass distributions under the optimal strategies for $M=1$ and $M=4$, together with the singular reflection limit. In the first case ($M=1$) the harvesting bound is strict enough to make the optimal strategy to be always extracting, and the density is smooth. For $M=4$ the optimal density includes a kink at $b^*_M$ due to the drift change above the thresholds. In the extreme case $M = \infty$ the population cannot exceed the optimal reflecting boundary $x'$.

\begin{figure}
    \centering
    \includegraphics[width=0.85\linewidth]{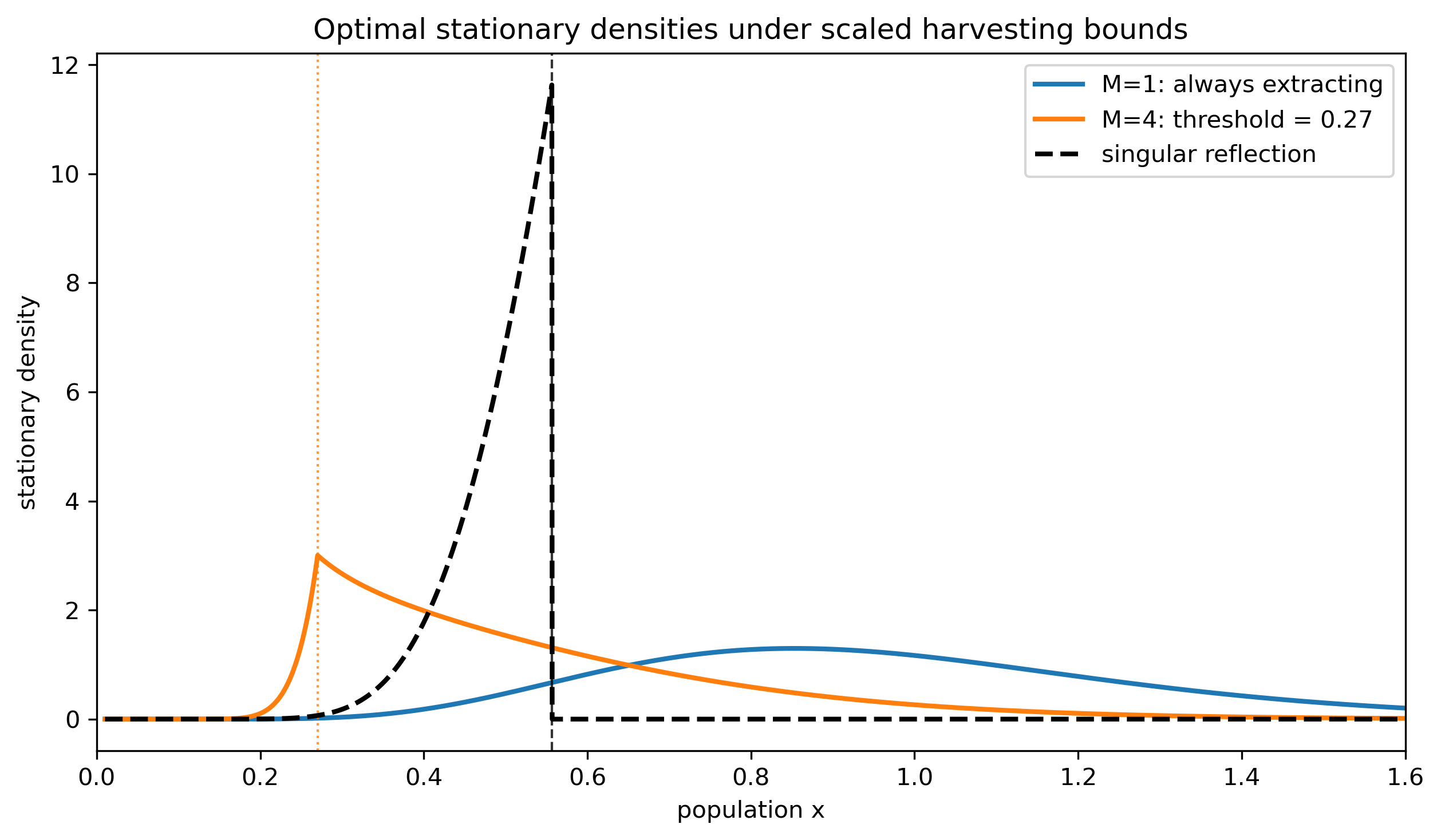}
    \caption{Stationary biomass densities under optimal policies for the harvesting bound $k_M(x)$. The dotted vertical line indicates the positive bounded-rate threshold for $M=4$, while the dashed black line represents the singular reflection limit.}
    \label{Fig:Densities}
\end{figure}

\section*{Acknowledgements}

Erik Ekström gratefully acknowledges support from the Swedish Research Council.
Funding for Dante Mata in support of this work was provided by a FRQNT Postdoctoral Fellowship (369449) from the Fonds de Recherche du Qu\'ebec. In addition, Dante Mata stayed in the Department of Mathematics at Uppsala University and received support from the research environment there.
The work of Harto Saarinen was supported by the OP Research Foundation Grant 20240114.

\newpage

\bibliographystyle{siam}
\bibliography{literature}

\end{document}